\newtheorem{thm}{Theorem}[section]
\newtheorem{lem}[thm]{Lemma}
\newtheorem{prop}[thm]{Proposition}
\theoremstyle{definition}
\newtheorem{exa}[thm]{Example}
\newtheorem*{proposition}{Proposition}
\numberwithin{equation}{section}
\begin{document}

\baselineskip=17pt


\title
[On Dirichlet series involving $\zeta(s)$]
{On Dirichlet Series Involving $\zeta(s)$ and Extensions of the Euler--Mascheroni Constant}
\author[T. Noda]{Takumi Noda}
\address{College of Engineering\\ 
Nihon University \\
1 Nakagawara, Tokusada, Tamuramachi, Koriyama, Fukushima 963--8642, Japan}
\email{noda.takumi@nihon-u.ac.jp}

\date{}

\begin{abstract}
In this paper, we introduce a class of Dirichlet series defined in terms of the Riemann zeta-function and establish integral representations for these series.
Motivated by the study of their special values, we define an extension of the Euler--Mascheroni constant and obtain explicit expressions for certain values in terms of the Bendersky constants.
These results provide a unified framework for the evaluation of Dirichlet series involving the Riemann zeta-function at integer arguments, together with the associated number-theoretic constants.
\end{abstract}

\subjclass[2020]{Primary 11M41 $\cdot$ Secondary 11M06}

\keywords{Riemann zeta-function, Generalized Euler--Mascheroni constant, Glaisher--Kinkelin constant, Bendersky constant, Stieltjes constant}

\maketitle
\section{Introduction}
As usual, $\zeta(s)$ denotes the Riemann zeta function.
Throughout this paper, we denote by $\mathbb{N}=\{1,2,3,\dots\}$ the set of natural numbers
and by $\mathbb{N}_0=\mathbb{N}\cup\{0\}$ the set of nonnegative integers.
The main object of this paper is the Dirichlet series
\begin{equation}
D(s_1,s_2;\lambda)
:=
\sum_{k=1}^{\infty}
\frac{\zeta(s_1+k)-1}{(k+\lambda)^{s_2}},
\end{equation}
which may be regarded as a generating function associated with the Riemann zeta function.
Here $s_1,s_2\in\mathbb{C}$ with $s_1\notin\mathbb{Z}_{\le0}$, and
$\lambda\in\mathbb{C}$ with $\lambda\notin\mathbb{Z}_{<0}$.
To investigate the special values of $D(s_1,s_2;\lambda)$,
we introduce the function
\begin{equation}
\gamma(\alpha,\beta;\mu)
:=
\lim_{n\to\infty}
\left\{
\sum_{l=1}^{\mu}\frac{1}{l^{\beta}}
\sum_{k=1}^{n}\frac{1}{k^{\alpha+l-\mu}}
-
\sum_{k=2}^{n}
\mathrm{Li}_{\beta}\!\left(\frac1k\right) k^{\mu-\alpha}
\right\},
\end{equation}
which serves as a natural extension of the Euler--Mascheroni constant.
Here $\alpha,\beta\in\mathbb{C}$ with $\Re(\alpha)>0$ and $\mu\in\mathbb{N}_0$,
and $\mathrm{Li}_s(z)=\sum_{l=1}^{\infty}z^l/l^s$ denotes the polylogarithm.
When $\mu=0$, the corresponding sum is omitted.
For $\mu \geqslant 1$,
\begin{equation}
D(\alpha,\beta;\mu)
=
\sum_{l=1}^{\mu}\frac{1}{l^{\beta}}
-
\gamma(\alpha,\beta;\mu).
\end{equation}
This paper establishes fundamental properties of the function $D(s_1,s_2;\lambda)$,
including integral representations (Theorem~2.1 and Lemma~4.1),
and provides closed-form expressions for its special values
(Theorems~2.2--2.4 and~2.6)
in terms of values of the Riemann zeta function and certain classical arithmetic constants.
These results place $D(s_1,s_2;\lambda)$ within a unified framework
that both incorporates earlier studies and reveals structural features
that have not been explicitly formulated before.
\par
Historically, Dirichlet series of the form \((1.1)\) originate in classical results of Goldbach,
communicated in a letter to D.~Bernoulli.
In terms of the Riemann zeta function, Goldbach's theorem may be written as
\begin{equation}
\sum_{k=2}^{\infty}\bigl\{\zeta(k)-1\bigr\}=1,
\end{equation}
which corresponds to the case $D(1,0;1)=1$.
For $s_1=1$, $s_2=1$, and $\lambda=1$, identity \((1.3)\) reduces to Euler's classical formula
\begin{equation}
\sum_{k=2}^{\infty}\frac{\zeta(k)-1}{k}=1-\gamma,
\end{equation}
where $\gamma=\gamma(1,1;1)$ denotes the Euler--Mascheroni constant,
defined by
\begin{equation*}
\gamma:=\lim_{n\to\infty}
\left(\sum_{k=1}^{n}\frac{1}{k}-\log n\right).
\end{equation*}
Following these pioneering works, numerous related formulas were obtained
(see, for example,
\cite{glaisher,glaisher2,ramanujan,ramaswami,apostol}).
A comprehensive historical account of closed-form evaluations of series
involving the Riemann zeta function can be found in
\cite{srivastava} and \cite[Chapter~3]{srivastava-choi}.
Motivated by these developments, it is natural to investigate explicit formulas
for the special values of $D(s_1,s_2;\lambda)$.
In the present paper, our approach relies on formulas due to
Srivastava \cite[(5.3)]{srivastava},
Adamchik--Srivastava \cite[Proposition~4]{adamchik-srivastava},
and Choi--Srivastava \cite[(4.40)]{choi-srivastava},
which will be recalled in Section~2.
%
\section{Main Results}
\paragraph{Notation.}
Throughout this paper, $\Gamma(s)$ and $\zeta(s,\alpha)$ denote the gamma function
and the Hurwitz zeta function, respectively.
The symbol $\int_{+\infty}^{(0+)}$ denotes integration along a Hankel contour,
which starts at positive infinity on the real axis, encircles the origin
counterclockwise with a small radius, and then returns to the starting point.
The generalized harmonic numbers are defined by
$H_m^{(n)}:=\sum_{k=1}^{m}k^{-n}$,
with $H_m:=H_m^{(1)}$ denoting the ordinary harmonic numbers,
and we set $H_0^{(n)}:=0$.
We write $B_j$ for the $j$th Bernoulli number.
The binomial coefficient $\binom{x}{k}$ is defined by
$x(x-1)\cdots(x-k+1)/k!$ for $k\in\mathbb{N}_0$.
\begin{thm}
Let $\lambda\in\mathbb{C}$ with $\lambda\notin\mathbb{Z}_{<0}$.
The Dirichlet series $D(s_1,s_2;\lambda)$ defined in (1.1)
converges absolutely for all $s_1,s_2\in\mathbb{C}$
except when $s_1\in\{0,-1,-2,\dots\}$.
Moreover, it admits the following integral representation:
\begin{equation}
D(s_1,s_2;\lambda)
=
\frac{e^{-\pi i s_2}\Gamma(1-s_2)}
{2\pi i\bigl(e^{2\pi i s_1}-1\bigr)}
\int\limits_{+\infty}^{(0+)}
\int\limits_{+\infty}^{(0+)}
\frac{v^{s_1-1}w^{s_2-1}}
{e^{v+\lambda w}(e^{v}-1)}
\sum_{k=1}^{\infty}
\frac{(v/e^{w})^{k}}{\Gamma(s_1+k)}
\,dv\,dw .
\end{equation}
This representation also defines a holomorphic function on the $(s_1,s_2)$-plane, except for $s_1\in\{0,-1,-2,\dots\}$.
\end{thm}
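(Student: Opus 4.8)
The plan is to assemble the double integral from two classical Hankel‑type representations and then to read holomorphy off the integral side. For the convergence statement, write $\zeta(s_1+k)-1=\sum_{n\ge 2}n^{-(s_1+k)}$, so that $|\zeta(s_1+k)-1|\le\sum_{n\ge 2}n^{-\Re(s_1)-k}$, which is finite once $k>1-\Re(s_1)$ and is $O(2^{-k})$ as $k\to\infty$, whereas $|(k+\lambda)^{-s_2}|=|k+\lambda|^{-\Re(s_2)}$ grows only polynomially in $k$ (and $k+\lambda\neq 0$ automatically, since $\lambda\notin\mathbb{Z}_{<0}$). Geometric decay beating polynomial growth gives absolute convergence of the tail, hence of the whole series, for every $s_1,s_2$; the only obstruction is a term whose $\zeta$‑argument $s_1+k$ equals the pole $1$, and over $k\ge 1$ this occurs exactly when $s_1\in\{0,-1,-2,\dots\}$.

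\emph{The integral identity.} Starting from $\Gamma(s)\bigl(\zeta(s)-1\bigr)=\int_0^{\infty}v^{s-1}\,(e^{v}(e^{v}-1))^{-1}\,dv$ (valid for $\Re(s)>1$, since $\tfrac{1}{e^{v}-1}-e^{-v}=\tfrac{1}{e^{v}(e^{v}-1)}$) one obtains its Hankel form
\[
\zeta(s)-1=\frac{1}{(e^{2\pi is}-1)\,\Gamma(s)}\int_{+\infty}^{(0+)}\frac{v^{s-1}}{e^{v}(e^{v}-1)}\,dv ,
\]
where the contour has a fixed radius $<2\pi$ (so no further pole of $(e^{v}-1)^{-1}$ is enclosed) and the identity persists for all $s$ by analytic continuation; and, for $\Re(a)>0$,
\[
a^{-s}=\frac{e^{-\pi is}\,\Gamma(1-s)}{2\pi i}\int_{+\infty}^{(0+)}w^{s-1}e^{-aw}\,dw .
\]
Putting $s=s_1+k$ in the first (so that $e^{2\pi i(s_1+k)}=e^{2\pi is_1}$ makes the prefactor independent of $k$) and $s=s_2$, $a=k+\lambda$ in the second, then summing on $k$ and interchanging summation with the two integrations, turns $\sum_{k\ge 1}(\zeta(s_1+k)-1)(k+\lambda)^{-s_2}$ into the right‑hand side of $(2.1)$; the inner series $\sum_{k\ge 1}(v/e^{w})^{k}/\Gamma(s_1+k)$ is exactly what the factors $v^{k}$, $e^{-kw}$ and $1/\Gamma(s_1+k)$ produce.

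\emph{Justification and holomorphy.} On the product of the two Hankel contours the term‑wise absolute values sum to an integrable function: $(e^{v}(e^{v}-1))^{-1}$ supplies decay of order $e^{-2\Re(v)}$, $e^{-\lambda w}$ supplies $e^{-\Re(\lambda)\Re(w)}$, and the entire function $z\mapsto\sum_{k\ge 1}z^{k}/\Gamma(s_1+k)$ (a Mittag--Leffler function, of order $1$) is $O(e^{|z|})$ with $z=v/e^{w}$; a direct estimate shows the product is integrable precisely for $\Re(\lambda)>-1$, which is the range on which $(2.1)$ is an absolutely convergent double integral, the remaining admissible $\lambda$ being covered by analytic continuation. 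The same domination, together with local uniform convergence of the inner power series in $(s_1,z)$ and exponential decay at infinity, shows by Morera's theorem and Fubini that the double integral $F(s_1,s_2)$ is holomorphic throughout $\mathbb{C}^{2}$. It then remains to check that the prefactor $\tfrac{e^{-\pi is_2}\Gamma(1-s_2)}{2\pi i(e^{2\pi is_1}-1)}$ introduces no new singularity except at $s_1\in\mathbb{Z}_{\le 0}$: its apparent poles at $s_2\in\mathbb{N}$ cancel because $\tfrac{e^{-\pi is_2}\Gamma(1-s_2)}{2\pi i}\int_{+\infty}^{(0+)}w^{s_2-1}(\cdots)\,dw$ equals $\Gamma(s_2)^{-1}$ times the collapse of the contour to $\int_0^{\infty}$ (for $s_2\in\mathbb{N}$ the $w$‑integrand is single‑valued and entire with enough decay, so $F$ vanishes), and its apparent poles at $s_1\in\mathbb{Z}_{\ge 1}$ cancel because $\tfrac{1}{e^{2\pi is_1}-1}\int_{+\infty}^{(0+)}v^{s_1+k-1}(e^{v}(e^{v}-1))^{-1}\,dv=\Gamma(s_1+k)\bigl(\zeta(s_1+k)-1\bigr)$ is holomorphic in $s_1$ off $\mathbb{Z}_{\le 0}$. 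At $s_1\in\mathbb{Z}_{\le 0}$ the function genuinely fails to exist, inheriting the pole of $\zeta$ at $1$ through the single term with $k=1-s_1$.

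\emph{Main obstacle.} The crux is the domination/convergence analysis of the double contour integral: one must weigh the double‑exponential decay from $(e^{v}(e^{v}-1))^{-1}$ and $e^{-\lambda w}$ against the order‑$1$ growth of $\sum_{k}z^{k}/\Gamma(s_1+k)$ and pin down exactly when the iterated integral converges — this is where the hypothesis on $\lambda$ really lives, and it is what both legitimises the interchange of $\sum_k$ with $\iint$ and delivers holomorphy of $F$. A secondary delicate point is verifying that the apparent poles of the prefactor at $s_2\in\mathbb{N}$ and at $s_1\in\mathbb{Z}_{\ge 1}$ are removable, so that the integral side is genuinely holomorphic there.
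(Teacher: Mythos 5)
Your proof is correct and follows essentially the same route as the paper: Hankel-contour representations for $\zeta(s_1+k)-1=\zeta(s_1+k,2)$ and for $(k+\lambda)^{-s_2}$ (the paper writes the former with $\Gamma(1-s_1-k)$ and then applies the reflection formula, which is exactly your $\tfrac{1}{(e^{2\pi i s}-1)\Gamma(s)}$ prefactor), termwise substitution and interchange justified by domination, and removal of the apparent poles of the prefactor at $s_2\in\mathbb{N}$ and $s_1\in\mathbb{Z}_{\ge 1}$ via the vanishing of the corresponding single-valued contour integrals. Your explicit observation that the double integral converges absolutely only for $\Re(\lambda)>-1$, with the remaining admissible $\lambda$ reached by analytic continuation, is a point the paper leaves implicit, but otherwise the argument coincides with the paper's (which merely inserts an intermediate real-axis representation, Lemma~4.1, for $\Re(s_1)>0$).
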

\begin{thm}
Let $\gamma(\alpha,\beta;\mu)$ be the generalized Euler constant
defined in (1.2).
For $n\in\mathbb{N}$ and $m,\mu\in\mathbb{N}_0$, one has
\begin{equation}
D(n,m;\mu)
=
\begin{cases}
H_{\mu}^{(m)}-\gamma(n,m;\mu),
& \text{if $m>0$,}
\\[4pt]
n-\displaystyle\sum_{k=2}^{n}\zeta(k),
& \text{if $m=0$.}
\\[-4pt]
\end{cases}
\end{equation}
\end{thm}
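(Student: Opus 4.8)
\emph{Proof plan.} I will treat the cases $m=0$ and $m>0$ separately. For $\mu\ge 1$ and $m>0$ the assertion is in fact immediate from the identity $(1.3)$ recorded in the Introduction, since $\sum_{l=1}^{\mu}l^{-m}=H_{\mu}^{(m)}$; the computation below may therefore equally be regarded as a proof of $(1.3)$, and it additionally covers the remaining value $\mu=0$. Throughout I will rearrange absolutely convergent series of nonnegative terms freely, observing that $n\ge 1$ guarantees the absolute convergence of $D(n,m;\mu)$ by Theorem~2.1.

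For $m=0$ the argument is short: the denominator $(j+\mu)^{0}$ equals $1$, so $D(n,0;\mu)=\sum_{j=1}^{\infty}\bigl(\zeta(n+j)-1\bigr)=\sum_{k=n+1}^{\infty}\bigl(\zeta(k)-1\bigr)$, independently of $\mu$. I then invoke Goldbach's identity $(1.4)$ — or, for a self-contained derivation, the telescoping evaluation $\sum_{k=2}^{\infty}(\zeta(k)-1)=\sum_{j\ge 2}\bigl(\tfrac1{j-1}-\tfrac1j\bigr)=1$ — and subtract the finite head $\sum_{k=2}^{n}(\zeta(k)-1)=\sum_{k=2}^{n}\zeta(k)-(n-1)$ to obtain $D(n,0;\mu)=n-\sum_{k=2}^{n}\zeta(k)$.

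For $m>0$ I will prove the equivalent statement $\gamma(n,m;\mu)=H_{\mu}^{(m)}-D(n,m;\mu)$. The key manipulation is to split the polylogarithm at index $\mu$: $\operatorname{Li}_m(1/k)=\sum_{l=1}^{\mu}k^{-l}l^{-m}+R_{\mu}(k)$, where $R_{\mu}(k):=\sum_{l=\mu+1}^{\infty}k^{-l}l^{-m}$. Inserting this into the $N$-th partial sum inside the limit $(1.2)$ and peeling off the $k=1$ contribution of the first double sum, the piece $\sum_{l=1}^{\mu}l^{-m}\sum_{k=2}^{N}k^{-(n+l-\mu)}$ cancels \emph{exactly} against $\sum_{k=2}^{N}k^{\mu-n}\sum_{l=1}^{\mu}k^{-l}l^{-m}$, leaving
\[
\sum_{l=1}^{\mu}l^{-m}\sum_{k=1}^{N}k^{-(n+l-\mu)}-\sum_{k=2}^{N}k^{\mu-n}\operatorname{Li}_m(1/k)=H_{\mu}^{(m)}-\sum_{k=2}^{N}k^{\mu-n}R_{\mu}(k).
\]
Since $R_{\mu}(k)=O_{\mu,m}(k^{-\mu-1})$ and $n\ge 1$, the tail $\sum_{k\ge 2}k^{\mu-n}R_{\mu}(k)$ converges absolutely, so letting $N\to\infty$ gives $\gamma(n,m;\mu)=H_{\mu}^{(m)}-\sum_{k=2}^{\infty}k^{\mu-n}R_{\mu}(k)$. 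A final interchange of summations — legitimate now, as all terms are positive — yields
\[
\sum_{k=2}^{\infty}k^{\mu-n}R_{\mu}(k)=\sum_{l=\mu+1}^{\infty}l^{-m}\sum_{k=2}^{\infty}k^{\mu-n-l}=\sum_{l=\mu+1}^{\infty}\frac{\zeta(n+l-\mu)-1}{l^{m}},
\]
and reindexing $l=j+\mu$ identifies the right-hand side with $\sum_{j=1}^{\infty}\frac{\zeta(n+j)-1}{(j+\mu)^{m}}=D(n,m;\mu)$, completing this case. The value $\mu=0$ is subsumed, with $H_{0}^{(m)}=0$ and the first double sum empty; the same computation evidently yields $(1.3)$ for arbitrary $\alpha,\beta$ with $\Re(\alpha)>0$.

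The step I expect to be the main obstacle is the regularization in the case $m>0$: when $n\le\mu$ the two sums inside the limit $(1.2)$ each diverge individually as $N\to\infty$, so one is not permitted to interchange orders of summation \emph{before} the first $\mu$ terms of $\operatorname{Li}_m(1/k)$ have been cancelled against the first double sum. Performing that exact cancellation at finite level $N$, and only afterwards letting $N\to\infty$ and re-summing the now-convergent nonnegative remainder, is the crux; the remainder of the argument is routine bookkeeping.
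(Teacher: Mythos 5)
Your proposal is correct and follows essentially the same route as the paper: the $m=0$ case via Goldbach's identity $(1.4)$, and the $m>0$ case by splitting $\operatorname{Li}_m(1/k)$ at index $\mu$, cancelling the head against the finite double sum, and identifying the tail $\sum_{k\ge 2}k^{\mu-n}R_{\mu}(k)$ with $D(n,m;\mu)$ after an interchange of summation. The only (immaterial) difference is direction — the paper starts from $D(n,m;\mu)$ and rewrites it into the limit defining $\gamma(n,m;\mu)$, whereas you start from $(1.2)$ and arrive at $D$ — and your finite-$N$ bookkeeping of the cancellation is, if anything, slightly more explicit than the paper's.
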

Since classical times, it has been recognized that
the arithmetic nature of Euler's constant remains unresolved:
its transcendence and the existence of any closed-form representation are unknown,
and even its irrationality has not been established
(for a comprehensive survey, see \cite{ragarias}).
Likewise, a closed-form expression for
$\gamma(n,m;\mu)$
$(n,m\in\mathbb{N},\ \mu\in\mathbb{N}_0)$
also appears to be intractable.
As shown in {(1.5)}, the classical case $D(1,1;1)$
was already evaluated by Euler~\cite{euler}.
Furthermore, for $\mu=2,3$, the corresponding values
$D(1,1;\mu)$
were obtained by Srivastava~\cite[(5.3)]{srivastava}
and Choi--Srivastava~\cite[(4.40)]{choi-srivastava},
in terms of the Stirling and Glaisher--Kinkelin constants, respectively:
\[
\begin{array}{l}
D(1,1;2)= {3}/{2}-{\gamma}/{2}-\log A_0, \\[4pt]
D(1,1;3)= {11}/{6}-{\gamma}/{3}-\log A_0-2\log A_1 .
\end{array}
\]
Here $A_0=\sqrt{2\pi}$ is the Stirling constant, and
$A_1$ is the Glaisher--Kinkelin constant, defined as follows:
\begin{equation*}
\begin{aligned}
\log A_0
&:= \log \sqrt{2\pi}
= \lim_{n\to\infty}
\left\{
\sum_{k=1}^{n}\log k
-\left(n+\frac{1}{2}\right)\log n + n
\right\},
\\
\log A_1
&:= \lim_{n\to\infty}
\left\{
\sum_{k=1}^{n} k\log k
-\left(\frac{n^2}{2}+\frac{n}{2}+\frac{1}{12}\right)\log n
+\frac{n^2}{4}
\right\}.
\end{aligned}
\end{equation*}
The Glaisher--Kinkelin constant and its generalizations originally arose
in the study of the hyperfactorial function and its extensions.
In general, the constants $A_m$ ($m\in\mathbb{N}_{\geqslant 2}$)
are referred to as the Bendersky constants
(cf.~\cite{bendersky};
see also \cite[Section~5]{adamchik}, \cite{copo}, and \cite{perkins-gorder}).
They are defined by the limit
\begin{equation*}
\log A_m
:= \lim_{n\to\infty}
\left\{
\sum_{k=1}^{n} k^{m}\log k - p(n,m)
\right\},
\end{equation*}
where the function $p(n,m)$ is given by
\begin{equation}
\begin{aligned}
p(n,m)
={}&
\left(\frac{n^{m+1}}{m+1}+\frac{n^{m}}{2}\right)\log n
-\frac{n^{m+1}}{(m+1)^2}
\\
&\quad
+ m!\sum_{r=1}^{m}
\frac{B_{r+1}}{(r+1)!}
\frac{n^{m-r}}{(m-r)!}
\left(
\log n
+ (1-\delta_{m,r})
\sum_{i=1}^{r}\frac{1}{m-i+1}
\right),
\end{aligned}
\end{equation}
with $\delta_{m,r}$ denoting the Kronecker delta.
When $m=0$, the summation over $r$ is understood to be omitted.
With these preliminaries in place, we are now ready to state one of our main theorems.
\begin{thm}[A relation between generalized Euler constants and the Bendersky constants]
For $\mu\in\mathbb{N}$, 
\begin{equation}
\gamma(1,1;\mu)
=
\frac{1}{\mu}\gamma
+
\sum_{j=0}^{\mu-2}
\binom{\mu-1}{j}\log A_{j}.
\end{equation}
Consequently,
\begin{equation*}
D(1,1;\mu)
=
H_{\mu}
-\frac{1}{\mu}\gamma
-\sum_{j=0}^{\mu-2}
\binom{\mu-1}{j}\log A_{j}.
\end{equation*}
Here the summation over $j$ is understood to be empty when $\mu=1$.
\end{thm}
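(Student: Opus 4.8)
The plan is to establish the identity \eqref{...} — the relation $\gamma(1,1;\mu) = \frac{1}{\mu}\gamma + \sum_{j=0}^{\mu-2}\binom{\mu-1}{j}\log A_j$ — directly from the definition \((1.2)\) of the generalized Euler constant with $\alpha=\beta=1$, by unpacking the two finite sums inside the limit and regrouping them so that the Bendersky-type limits emerge. First I would write out $\gamma(1,1;\mu)$ explicitly: the first inner sum becomes $\sum_{l=1}^{\mu}\frac{1}{l}\sum_{k=1}^n\frac{1}{k^{1+l-\mu}} = \sum_{l=1}^{\mu}\frac{1}{l}\,H_n^{(1+l-\mu)}$, while the second is $\sum_{k=2}^n \mathrm{Li}_1(1/k)\,k^{\mu-1} = -\sum_{k=2}^n k^{\mu-1}\log(1-1/k)$, using $\mathrm{Li}_1(z) = -\log(1-z)$. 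The key observation is that $\log(1-1/k) = \log(k-1) - \log k$, so the second sum telescopes in a weighted fashion; applying summation by parts (Abel summation) against the polynomial weight $k^{\mu-1}$ converts $\sum_{k=2}^n k^{\mu-1}(\log(k-1)-\log k)$ into a combination of $\sum_{k=1}^n k^m \log k$ for $m = 0, 1, \dots, \mu-1$, with coefficients that are finite differences of the monomial $k^{\mu-1}$, i.e. binomial coefficients — this is where the $\binom{\mu-1}{j}$ factors will enter.

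Next I would match the divergent polynomial-and-logarithmic parts. Each $\sum_{k=1}^n k^j\log k$ is, by definition of $A_j$, equal to $p(n,j) + \log A_j + o(1)$; summing these with the binomial weights produces $\sum_{j=0}^{\mu-1}(\text{coeff})\log A_j$ plus a divergent remainder $\sum_j (\text{coeff})\,p(n,j)$. Separately, the first piece $\sum_{l=1}^\mu \frac1l H_n^{(1+l-\mu)}$ contributes: the term $l=\mu$ gives $\frac{1}{\mu}H_n = \frac{1}{\mu}(\log n + \gamma + o(1))$, which supplies exactly the $\frac{1}{\mu}\gamma$ on the right-hand side; the terms $l<\mu$ give $H_n^{(1+l-\mu)}$ with $1+l-\mu \le 0$, i.e. sums of the form $\sum_{k=1}^n k^r$ with $r = \mu-1-l \ge 0$, which are Bernoulli polynomials in $n$ (Faulhaber), hence purely divergent polynomial terms with no constant contribution beyond what the polynomial formula dictates. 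The remaining task is bookkeeping: verify that all the divergent pieces — the $p(n,j)$ terms, the Faulhaber polynomials, and the $\log n$ from $\frac1\mu H_n$ — cancel identically, leaving only the constants $\frac1\mu\gamma + \sum_{j=0}^{\mu-2}\binom{\mu-1}{j}\log A_j$ (the top index drops from $\mu-1$ to $\mu-2$ because the $j=\mu-1$ contribution, if present, must be absorbed — I expect the coefficient of $\log A_{\mu-1}$ to vanish, or equivalently $A_{\mu-1}$ not to appear because its associated monomial $k^{\mu-1}$ is the very weight being differenced).

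The main obstacle will be the last step: showing that the divergent terms cancel \emph{exactly}. This requires knowing the precise form of $p(n,m)$ in \((2.3)\) — in particular its lower-order terms involving $B_{r+1}$ — and checking that the Abel-summation coefficients, the Faulhaber polynomials, and these $p(n,m)$ expansions conspire. A cleaner route, which I would pursue in parallel, is to avoid re-deriving everything from scratch and instead leverage the already-known cases: Euler's formula gives $\mu=1$; Srivastava's and Choi--Srivastava's formulas give $\mu=2,3$; and then prove the general case by induction on $\mu$, using a recursion that relates $\gamma(1,1;\mu)$ to $\gamma(1,1;\mu-1)$. Such a recursion should follow by comparing the defining limits for consecutive $\mu$: the shift $\mu \mapsto \mu-1$ changes the exponent structure in \((1.2)\) in a controlled way, and the difference of the two inner expressions telescopes against one more power of $k$. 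If the recursion takes the shape $\gamma(1,1;\mu) = \frac{\mu-1}{\mu}\gamma(1,1;\mu-1) + (\text{binomial combination of }\log A_j)$ — consistent with $\frac1\mu\gamma = \frac{\mu-1}{\mu}\cdot\frac{1}{\mu-1}\gamma$ — then Pascal's rule $\binom{\mu-1}{j} = \binom{\mu-2}{j} + \binom{\mu-2}{j-1}$ closes the induction immediately. I would present the direct Abel-summation argument as the main proof and note the inductive check against $\mu=1,2,3$ as corroboration. Finally, the \emph{namely} restatement in terms of $D(1,1;\mu)$ is then immediate from \((1.3)\) with $\alpha=\beta=1$, since $\sum_{l=1}^\mu 1/l^1 = H_\mu$.
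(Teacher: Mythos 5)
Your outline reproduces the skeleton of the paper's own argument: expand $\gamma(1,1;\mu)$ from \((1.2)\), use ${\rm Li}_1(1/k)=\log k-\log(k-1)$ and Abel summation against $k^{\mu-1}$ to produce $\sum_{k}\bigl((k+1)^{\mu-1}-k^{\mu-1}\bigr)\log k$, expand the finite difference binomially (which is exactly why $j$ stops at $\mu-2$ and $A_{\mu-1}$ never appears), and match the $l=\mu$ term $\frac1\mu H_n$ against $\frac1\mu(\log n+\gamma)$. So the strategy is the right one. The genuine gap is that the step you defer as ``bookkeeping'' is in fact the entire technical content of the theorem. What must be shown is the exact identity
\begin{equation*}
\sum_{j=0}^{m-1}\binom{m}{j}p(n,j)
=(n+1)^{m}\log n-\frac{\log n}{m+1}-\sum_{l=1}^{m}\frac{1}{l}\sum_{k=1}^{n}k^{m-l}
\end{equation*}
(the paper's Lemma~5.2, applied with $m=\mu-1$). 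The logarithmic part of this is routine, using only $\sum_{j=0}^{k}\binom{k+1}{j}B_j=0$; but the polynomial part requires matching the Faulhaber expansions of $\sum_k k^{m-l}$ against the $B_{r+1}$--terms of $p(n,j)$, and this reduces to a nontrivial Bernoulli-number identity mixing $B_j$ with harmonic numbers, namely $\sum_{j=0}^{m-k}\bigl[\frac{(-1)^{j}}{(m-j-k+1)(j+k)}\binom{j+k}{j}+\frac{1}{m+1}\binom{m+1}{k}\binom{m-k+1}{j}H_{j+k-1}\bigr]B_j=0$, which the paper proves (Proposition~5.1) via Agoh's theorem on linear recurrences for Bernoulli polynomials. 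Nothing in your plan supplies this ingredient, and without it the divergent pieces do not visibly cancel.

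Your fallback induction is also not a safe escape route as stated. The hypothesized recursion $\gamma(1,1;\mu)=\frac{\mu-1}{\mu}\gamma(1,1;\mu-1)+(\text{binomial combination of }\log A_j)$ is already inconsistent with the target at $\mu=3$: the formula gives $\gamma(1,1;3)=\frac{\gamma}{3}+\log A_0+2\log A_1$ while $\frac{2}{3}\gamma(1,1;2)=\frac{\gamma}{3}+\frac{2}{3}\log A_0$, so the leftover $\frac13\log A_0+2\log A_1$ carries a non-integer coefficient that Pascal's rule $\binom{\mu-1}{j}=\binom{\mu-2}{j}+\binom{\mu-2}{j-1}$ cannot produce; the prefactor $\frac{\mu-1}{\mu}$ is correct only for the $\gamma$--term, not for the $\log A_j$--terms. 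Either the direct route must be completed with the Bernoulli--harmonic identity above, or a genuinely different recursion in $\mu$ must be derived and verified; as written, the proof is incomplete at its decisive step.
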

We next consider a natural variant of the Stieltjes constants,
denoted by $A_{-1}(a)$,
which will be used to describe the value of $D(1,1;0)$.
\begin{thm}[A relation for the Stieltjes constants]
For $a\in\mathbb{N}_0$, define
\begin{equation*}
\log A_{-1}(a)
:=
\lim_{n\to\infty}
\left\{
\sum_{k=1}^{n}\frac{\log k}{k+a}
-\frac{1}{2}\log^2 n
\right\}.
\end{equation*}
Then
\begin{equation*}
D(1,1;0)
=
\sum_{l=1}^{\infty}(-1)^l\zeta'(l+1)
=
\log A_{-1}(0)-\log A_{-1}(1).
\end{equation*}
Equivalently,
\begin{equation*}
A_{-1}(a)
=
\lim_{n\to\infty}
\prod_{k=1}^{n}
\frac{k^{1/(k+a)}}{n^{\log\sqrt{n}}},
\qquad
e^{D(1,1;0)}
=
\prod_{n=1}^{\infty}
n^{\frac{1}{n}-\frac{1}{n+1}} .
\end{equation*}
\end{thm}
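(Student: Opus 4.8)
\emph{Proof sketch.} The plan is to show that all three expressions in the statement equal the single absolutely convergent positive series $\sum_{n=2}^{\infty}\frac{\log n}{n(n+1)}$, and then to read off the two product formulas by exponentiation. To evaluate $D(1,1;0)=\sum_{k\ge 1}\frac{\zeta(k+1)-1}{k}$ (which converges by Theorem~2.1), I would expand $\zeta(k+1)-1=\sum_{n\ge 2}n^{-k-1}$, interchange the two summations (permissible by Tonelli, since all terms are nonnegative), and use $\sum_{k\ge 1}x^k/k=-\log(1-x)$ to obtain $D(1,1;0)=-\sum_{n\ge 2}\frac{\log(1-1/n)}{n}=\sum_{n\ge 2}\frac{\log n-\log(n-1)}{n}$. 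An Abel summation based on the telescoping identity $\sum_{k=2}^{n}(\log k-\log(k-1))=\log n$ (the boundary term $\tfrac{\log N}{N}$ vanishing in the limit) then converts this into $\sum_{n\ge 2}\log n\bigl(\tfrac1n-\tfrac1{n+1}\bigr)=\sum_{n\ge 2}\frac{\log n}{n(n+1)}$.

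Next I would treat $\sum_{l\ge 1}(-1)^l\zeta'(l+1)$: from $\zeta'(l+1)=-\sum_{n\ge 2}n^{-l-1}\log n$ (term-by-term differentiation, valid for $\Re(s)>1$) and the absolute convergence of the double series $\sum_{l,n}n^{-l-1}\log n=\sum_{n\ge 2}\frac{\log n}{n(n-1)}$, I would interchange the order of summation and sum the geometric series $\sum_{l\ge 1}(-1)^l n^{-l-1}=-\frac1{n(n+1)}$, landing again on $\sum_{n\ge 2}\frac{\log n}{n(n+1)}$. For the constants $A_{-1}(a)$ I would first note that the defining limit exists for every $a\in\mathbb{N}_0$, because $\sum_{k=1}^{n}\frac{\log k}{k+a}-\tfrac12\log^2 n$ differs from $\bigl(\sum_{k=1}^{n}\frac{\log k}{k}-\tfrac12\log^2 n\bigr)$ — which converges to the first Stieltjes constant, e.g. by comparison with $\int_1^n\frac{\log x}{x}\,dx=\tfrac12\log^2 n$ — by the absolutely convergent sum $-a\sum_{k=1}^{n}\frac{\log k}{k(k+a)}$. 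Subtracting the $a=1$ case from the $a=0$ case cancels the $\tfrac12\log^2 n$ terms and leaves $\log A_{-1}(0)-\log A_{-1}(1)=\lim_n\sum_{k=1}^{n}\log k\bigl(\tfrac1k-\tfrac1{k+1}\bigr)=\sum_{k\ge 2}\frac{\log k}{k(k+1)}$, which closes the chain of equalities.

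Finally, the two product identities follow by exponentiating: continuity of $\exp$ turns the limits and the convergent series into $A_{-1}(a)=\lim_{n\to\infty}\prod_{k=1}^{n}k^{1/(k+a)}\big/\,n^{\log\sqrt{n}}$ and $e^{D(1,1;0)}=\prod_{n\ge 1}n^{1/n-1/(n+1)}$ (the $n=1$ factor being $1$). The only genuinely analytic inputs are the existence of the Stieltjes-type limit and the justification of the two rearrangements of series; I expect the Abel-summation bookkeeping in the first step to demand the most care, since the naive split $\sum\frac{\log n}{n}-\sum\frac{\log(n-1)}{n}$ is a difference of divergent series and must be handled at the level of partial sums.
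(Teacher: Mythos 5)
Your proposal is correct and follows essentially the same route as the paper: expand $\zeta(k+1)-1$ as a double sum and swap to get $\sum_{k\ge 2}k^{-1}\bigl(\log k-\log(k-1)\bigr)$ (the paper phrases this as $\mathrm{Li}_1(1/k)$), Abel-sum to $\sum_{k}\frac{\log k}{k(k+1)}$, split this limit into $\log A_{-1}(0)-\log A_{-1}(1)$, and obtain the $\zeta'$ identity by the same interchange of an absolutely convergent double series. Your extra care about the boundary term in the Abel summation and the existence of the limits defining $\log A_{-1}(a)$ matches the paper's remark that convergence of $\sum_k \log k/(k(k+a))$ (or Euler--Maclaurin) guarantees well-definedness.
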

In the above theorem, $\log A_{-1}(0)$ coincides with the first
Stieltjes constant $\gamma_1$
(see, e.g., \cite[1.12\,(17)]{erdelyi}).
Theorems~2.3 and~2.4 show that the case $m=1$ is exceptional.
In contrast, obtaining comparably simple closed-form expressions
for $D(n,m;\mu)$ with $m \geqslant 2$ seems to require
the introduction of a broader class of arithmetic constants.
Nevertheless, one can derive the following identities.
\begin{thm}
Let $n,\mu\in\mathbb{N}$. Then
\begin{equation*}
\begin{aligned}
\sum_{m=0}^{\infty} D(n,m;\mu)
&=
n+H_{\mu-1}
-\gamma(n,1;\mu-1)
-\sum_{k=2}^{n}\zeta(k),
\\
\sum_{m=0}^{\infty}(-1)^m D(n,m;\mu)
&=
n-H_{\mu+1}
+\gamma(n,1;\mu+1)
-\sum_{k=2}^{n}\zeta(k),
\\
\sum_{m=0}^{\infty} D(n,2m+1;\mu)
&=
H_{\mu}
-\frac{1}{2}
\left\{
\gamma(n,1;\mu+1)
+\gamma(n,1;\mu-1)
+\frac{1}{\mu(\mu+1)}
\right\}.
\end{aligned}
\end{equation*}
In particular, when $n=1$, the summation over $k$ is understood
to be omitted.
\end{thm}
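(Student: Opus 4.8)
The plan is to establish all three identities simultaneously by interchanging the order of summation in $\sum_{m\ge0}(\pm1)^m D(n,m;\mu)$ and in $\sum_{m\ge0}D(n,2m+1;\mu)$, summing the resulting geometric series in $m$, and then recognizing the outcome in terms of the values $D(n,1;\mu-1)$, $D(n,1;\mu+1)$ and the Goldbach-type sum $\sum_{k\ge1}(\zeta(n+k)-1)$, all of which are supplied by Theorem~2.2 (together with $(1.4)$).

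First I would note that $\sum_{k=1}^{\infty}(\zeta(n+k)-1)=\sum_{j=n+1}^{\infty}(\zeta(j)-1)$ converges by Goldbach's theorem~$(1.4)$, and that for $k\ge1$, $\mu\ge1$ one has $k+\mu\ge2$, so that $\sum_{m=0}^{\infty}(k+\mu)^{-m}=(k+\mu)/(k+\mu-1)\le2$. Hence the double series $\sum_{m\ge0}\sum_{k\ge1}(\zeta(n+k)-1)(k+\mu)^{-m}$ converges absolutely, and by Tonelli/Fubini this legitimises the interchange of the two summations in each of the three cases; the alternating sum and the odd-index sum are dominated termwise by the same double series. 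Performing the inner summation over $m$ for fixed $k$ gives
\[
\sum_{m=0}^{\infty}\frac{1}{(k+\mu)^{m}}=1+\frac{1}{k+\mu-1},
\qquad
\sum_{m=0}^{\infty}\frac{(-1)^{m}}{(k+\mu)^{m}}=1-\frac{1}{k+\mu+1},
\]
\[
\sum_{m=0}^{\infty}\frac{1}{(k+\mu)^{2m+1}}
=\frac{k+\mu}{(k+\mu-1)(k+\mu+1)}
=\frac12\left(\frac{1}{k+\mu-1}+\frac{1}{k+\mu+1}\right),
\]
the last identity by a one-line partial-fraction decomposition.

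Substituting these back and splitting the $k$-sum accordingly, the ``$1\cdot$'' piece contributes $\sum_{k=1}^{\infty}(\zeta(n+k)-1)=n-\sum_{k=2}^{n}\zeta(k)$ by Goldbach's theorem~$(1.4)$ (equivalently, this is the $m=0$ case of Theorem~2.2), while each piece $\sum_{k=1}^{\infty}(\zeta(n+k)-1)/(k+\mu\pm1)$ equals $D(n,1;\mu\pm1)$ by definition, and Theorem~2.2 (with the exponent $m=1>0$) evaluates it as $H_{\mu\pm1}-\gamma(n,1;\mu\pm1)$, where the convention $H_0=0$ handles the value $\mu=1$ (so that $D(n,1;0)=-\gamma(n,1;0)$). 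This yields the first two formulas at once; for the third one additionally uses $H_{\mu-1}+H_{\mu+1}=2H_\mu-\tfrac1\mu+\tfrac1{\mu+1}=2H_\mu-\tfrac{1}{\mu(\mu+1)}$ to combine the harmonic terms. When $n=1$ the sum $\sum_{k=2}^{n}\zeta(k)$ is empty, which is the stated specialization.

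I do not expect a genuine obstacle here: the only points that require attention are the justification of the interchange of summations --- covered by the elementary bound above --- and the degenerate parameter $\mu-1=0$, at which one must apply Theorem~2.2 with $\lambda=0$ and use $H_0^{(1)}=0$. The remaining manipulations are the routine algebra indicated above.
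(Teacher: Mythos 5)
Your proposal is correct and follows essentially the same route as the paper: interchange the order of summation, evaluate the geometric series in $m$ to reduce everything to $D(n,0;\mu)$ and $D(n,1;\mu\pm1)$, and then apply Theorem~2.2 together with Goldbach's formula $(1.4)$. The only cosmetic difference is in the third identity, where you sum the odd-index geometric series directly by partial fractions while the paper obtains it by averaging the first two results via $\tfrac{1-(-1)^m}{2}$; your explicit justification of the interchange and your handling of the degenerate case $\mu-1=0$ are both sound and slightly more careful than the paper's.
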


We now turn to the case $D(n,-m;\mu)$.
In the fundamental case \mbox{$n=\mu=1$}, Adamchik and Srivastava
obtained the following explicit formula
\cite[Proposition~4]{adamchik-srivastava}
(see also \cite[Proposition~3.8]{srivastava-choi}).
\begin{proposition}
For $m \in \mathbb{N}$, 
\begin{equation*}
D(1,-m;1)=1+ \sum\limits_{k=2}^{m+1}(k-1)! S(m+1,k) \zeta(k).
\end{equation*}
Here $S(m,k)$ denotes the Stirling numbers of the second kind, defined by  
\[
x^m = \sum_{k=0}^{m} S(m,k)\,k!\,\binom{x}{k}.
\]
\end{proposition}
Building on this established result, we obtain the following theorem for $D(n,-m;\mu)$, which reduces to the above proposition
in the case $n=\mu=1$.
\begin{thm}
Let $n,m \in \mathbb{N}$ and $\mu \in \mathbb{N}_0$.
\begin{equation*}
D(n,-m; \mu)
=  
 \sum\limits_{l=0}^{m} 
 \binom{m}{l}
(\mu-n)^{m-l} D(1,-l;1) 
-\sum\limits_{k=2}^{n}\left\{\zeta(k)-1\right\}(k+\mu-n)^m. 
\end{equation*}
\end{thm}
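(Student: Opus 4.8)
The plan is to reduce the general $D(n,-m;\mu)$ to the single base case $D(1,-l;1)$ (which is known explicitly by the Adamchik--Srivastava formula, the Proposition recalled just above) by manipulating the defining Dirichlet series $(1.1)$ directly. First I would write
\[
D(n,-m;\mu)=\sum_{k=1}^{\infty}\bigl(\zeta(n+k)-1\bigr)(k+\mu)^{m},
\]
which converges since for $s_2=-m$ the factor $(k+\mu)^m$ is polynomial in $k$ while $\zeta(n+k)-1=O(2^{-k})$; this is exactly the absolute-convergence range of Theorem~2.1. The key observation is that $(k+\mu)^m$ should be re-expanded around the "shifted" argument $k+n$ that matches the index $n+k$ appearing inside $\zeta$. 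Writing $k+\mu=(k+n)+(\mu-n)$ and applying the binomial theorem,
\[
(k+\mu)^m=\sum_{l=0}^{m}\binom{m}{l}(\mu-n)^{m-l}(k+n)^{l}.
\]
Substituting this and interchanging the (finite outer, absolutely convergent inner) sums gives
\[
D(n,-m;\mu)=\sum_{l=0}^{m}\binom{m}{l}(\mu-n)^{m-l}\sum_{k=1}^{\infty}\bigl(\zeta(n+k)-1\bigr)(k+n)^{l}.
\]

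Next I would identify the inner sum. Setting $j=k+n$, so that $k=j-n$ and $n+k=j+n$... more carefully: with the substitution $j=n+k$ the term $\zeta(n+k)-1$ becomes $\zeta(j)-1$ and $(k+n)^l$ becomes $(j)^l$ only if... let me instead match it to $D(1,-l;1)=\sum_{k\ge 1}(\zeta(1+k)-1)(k+1)^l$. Put $k'=k+n-1\ge n$; then $\zeta(n+k)-1=\zeta(1+k')-1$ and $(k+n)^l=(k'+1)^l$, so
\[
\sum_{k=1}^{\infty}\bigl(\zeta(n+k)-1\bigr)(k+n)^{l}
=\sum_{k'=n}^{\infty}\bigl(\zeta(1+k')-1\bigr)(k'+1)^{l}
=D(1,-l;1)-\sum_{k'=1}^{n-1}\bigl(\zeta(1+k')-1\bigr)(k'+1)^{l}.
\]
Relabelling the finite correction sum by $k=k'+1$ (so $k$ runs from $2$ to $n$, and $k'+1=k$, $1+k'=k$) turns it into $\sum_{k=2}^{n}(\zeta(k)-1)k^{l}$. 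Thus
\[
D(n,-m;\mu)=\sum_{l=0}^{m}\binom{m}{l}(\mu-n)^{m-l}D(1,-l;1)
-\sum_{l=0}^{m}\binom{m}{l}(\mu-n)^{m-l}\sum_{k=2}^{n}\bigl(\zeta(k)-1\bigr)k^{l}.
\]

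Finally I would collapse the double finite sum in the correction term: interchanging the order and using the binomial theorem in reverse,
\[
\sum_{l=0}^{m}\binom{m}{l}(\mu-n)^{m-l}k^{l}=(k+\mu-n)^{m},
\]
which yields precisely $\sum_{k=2}^{n}(\zeta(k)-1)(k+\mu-n)^m$ and completes the proof. I do not anticipate a genuine obstacle here; the argument is a double application of the binomial theorem glued to an index shift, and the only points needing a line of care are (i) justifying the interchange of the finite outer sum with the absolutely convergent series (immediate from Theorem~2.1's convergence statement), and (ii) getting the bookkeeping of the two index shifts $k\mapsto k+n-1\mapsto k'+1$ consistent so that the finite correction really reindexes to $\sum_{k=2}^{n}$. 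When $n=1$ the correction sum is empty and the formula reduces, as it must, to $\sum_{l=0}^m\binom{m}{l}(\mu-1)^{m-l}D(1,-l;1)$, and for $n=\mu=1$ it collapses to the Adamchik--Srivastava identity, providing a useful consistency check.
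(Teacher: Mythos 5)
Your argument is correct and is essentially identical to the paper's proof: both write $(k+\mu)^m$ as a binomial expansion in $(k+n)+(\mu-n)$, identify the inner sum $\sum_{k\ge1}\{\zeta(n+k)-1\}(k+n)^l$ with $D(1,-l;1)$ minus the finite correction $\sum_{k=2}^{n}\{\zeta(k)-1\}k^{l}$, and then recombine the correction by the binomial theorem into $(k+\mu-n)^m$. The only cosmetic difference is that you derive the reduction of the inner sum by an explicit index shift, whereas the paper recognizes it as $D(n,-l;n)$ and invokes the index-shifting relation stated after Theorem~2.6.
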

%
We also note that the following index-shifting relations
for $D(n,\pm m;\mu)$ follow directly from Definition~(1.1).
Precisely, for $n,\mu\in\mathbb{N}$ and
$m\in\mathbb{Z}\setminus\{0\}$:
\begin{equation*}
D(n,m;\mu)
=
\begin{cases}
D(n-\mu+1,m;1)
-\displaystyle\sum_{k=2}^{\mu}
\frac{\zeta(k+n-\mu)-1}{k^{m}},
& \text{if $n \geqslant \mu$,}
\\[10pt]
D(1,m;\mu-n+1)
-\displaystyle\sum_{k=2}^{n}
\frac{\zeta(k)-1}{(k+\mu-n)^{m}},
& \text{if $n<\mu$.}
\end{cases}
\end{equation*}
By combining these index-shifting relations with
Theorems~2.3 and~2.4,
we obtain several structural relations among the values
of $D(n,1;\mu)$.
\par
As a concluding result of this section,
we establish a functional relation for
$D(s_1,s_2;\lambda)$ with respect to the variable $s_2$.
In particular, for the values $D(n,-m;0)$
with $n,m\in\mathbb{N}$ and $n>1$,
this relation yields an explicit representation.
\begin{thm}[A functional relation for $D(s_1,s_2;\lambda)$]
Let $s_1,s_2\in\mathbb{C}$ with $s_1\notin\mathbb{Z}_{\leqslant 0}$.
Assume that $\Re(s_1-\lambda)>1$, $\Re(s_2)<0$,
and $-1<\lambda \leqslant 0$.
Then the following functional relation holds:
\begin{equation*}
D(s_1,s_2;\lambda)
=
\sum_{\substack{k\geqslant 2,\, l\in\mathbb{Z}}}
\frac{e^{2\pi i l\lambda}\,\Gamma(1-s_2)}
{k^{\,s_1-\lambda}\,
(2\pi i l-\log k)^{\,1-s_2}}.
\end{equation*}
\end{thm}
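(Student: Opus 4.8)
The plan is to substitute the Dirichlet expansion $\zeta(s_1+k)-1=\sum_{m\ge 2}m^{-s_1-k}$, reverse the order of summation, and transform the resulting inner sum over $k$ by the Poisson (Lipschitz) summation formula. Since $\Re(s_1)=\Re(s_1-\lambda)+\lambda>1+\lambda>0$, we have $\Re(s_1+k)>1$ for every $k\ge 1$, so each Dirichlet series $\zeta(s_1+k)-1=\sum_{m\ge 2}m^{-s_1-k}$ converges; this inner sum decays geometrically in $k$ while $|k+\lambda|^{-\Re s_2}$ grows only polynomially (recall $\Re s_2<0$), so Fubini's theorem gives
\[
D(s_1,s_2;\lambda)=\sum_{m\ge 2}m^{-s_1}\sum_{k\ge 1}\frac{m^{-k}}{(k+\lambda)^{s_2}} .
\]

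Now fix an integer $m\ge 2$ and put $t=\log m>0$. Writing $m^{-k}=m^{\lambda}e^{-(k+\lambda)t}$ and then $k=n+1$, $\alpha=1+\lambda$, $\nu=1-s_2$, $\tau=it/(2\pi)$, the inner sum becomes $m^{\lambda}\sum_{n\ge 0}(n+\alpha)^{\nu-1}e^{2\pi i(n+\alpha)\tau}$, which is the left-hand side of the Lipschitz summation formula. I would obtain that formula from Poisson summation applied to $\psi(x)=x^{\nu-1}e^{2\pi i x\tau}\mathbf{1}_{x>0}$: its Fourier transform is $\widehat{\psi}(l)=\Gamma(\nu)\,\big(2\pi i(l-\tau)\big)^{-\nu}$, valid because $\Re\big(2\pi i(l-\tau)\big)=2\pi\Im\tau>0$ and $\Re\nu>0$; the hypothesis $\Re\nu>1$ (equivalently $\Re s_2<0$) ensures that $\psi$ is continuous with $\psi(0)=0$ and that $\sum_l|\widehat{\psi}(l)|<\infty$, so Poisson summation is legitimate, while $-1<\lambda\le 0$, i.e. $\alpha\in(0,1]$, is exactly what forces $\psi(n+\alpha)=0$ for $n\le -1$, so that $\sum_{n\in\mathbb{Z}}\psi(n+\alpha)$ equals the sum over $n\ge 0$ above. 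Evaluating $\widehat{\psi}$ at the integer frequencies and simplifying gives the key inner identity
\[
\sum_{k\ge 1}\frac{m^{-k}}{(k+\lambda)^{s_2}}=m^{\lambda}\,\Gamma(1-s_2)\sum_{l\in\mathbb{Z}}\frac{e^{2\pi i l\lambda}}{(2\pi i l-\log m)^{1-s_2}},
\]
the power being taken with the branch inherited from the Poisson kernel (equivalently, from the Hankel representation below); tracking it correctly is the delicate point noted at the end.

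Substituting this back, using $m^{-s_1}m^{\lambda}=m^{-(s_1-\lambda)}$ and renaming the index $m$ as $k$, yields the asserted double series; the rearrangement into a single sum over $(k,l)$ is justified by absolute convergence, which holds precisely under the stated hypotheses, since the inner $l$-sum is $O(1)$ in $k$ (as $\Re(1-s_2)>1$) and the outer sum then converges (as $\Re(s_1-\lambda)>1$). The step I expect to be most delicate is the passage to the dual lattice in the previous paragraph: besides verifying the hypotheses of Poisson summation, one must pin down the branch of $(2\pi i l-\log k)^{1-s_2}$ so that it is consistent across all $l$ and matches the intended normalization. An equivalent route, closer to the proof of Theorem~2.1 and producing that branch directly, is to start from the Hankel-type representation $(k+\lambda)^{-s_2}=\tfrac{\Gamma(1-s_2)}{2\pi i}\int_{+\infty}^{(0+)}(-w)^{s_2-1}e^{-(k+\lambda)w}\,dw$, sum the geometric series in $k$ under the integral to reach $\tfrac{\Gamma(1-s_2)}{2\pi i}\int_{+\infty}^{(0+)}\tfrac{(-w)^{s_2-1}e^{-\lambda w}}{me^{w}-1}\,dw$, and collapse the contour onto the simple poles $w_l=-\log m+2\pi i l$ of $1/(me^{w}-1)$, where the residue equals $(\log m-2\pi i l)^{s_2-1}m^{\lambda}e^{-2\pi i l\lambda}$; here $\Re s_2<0$ and $-1<\lambda\le 0$ are exactly what force the decay of $(-w)^{s_2-1}$ and of $e^{-\lambda w}$ needed to discard the far parts of the contour.
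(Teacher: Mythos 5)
Your proposal is correct and takes essentially the same route as the paper: after interchanging the two sums, the inner sum $\sum_{k\ge 1}m^{-k}(k+\lambda)^{-s_2}=m^{-1}\Psi(1/m,s_2,\lambda+1)$ is a Lerch transcendent, and the paper simply quotes its transformation formula from Erd\'elyi 1.11(6) --- which is exactly the Lipschitz identity you derive by Poisson summation --- while also remarking that the Hankel-contour/residue route you sketch as an alternative is an equivalent way to reach the same relation, so your only real addition is a self-contained proof of the cited formula. The one substantive point is the sign you yourself flag as delicate: your Fourier transform gives $2\pi i(l-\tau)=2\pi i l+\log m$, and likewise Erd\'elyi's formula at $z=1/k$ (so $-\log z=+\log k$) yields $(2\pi i l+\log k)^{1-s_2}$ in the denominator, so the $-\log k$ appearing in your displayed ``key inner identity'' (and in the theorem as stated) reflects a branch/sign convention that neither your write-up nor the paper's proof fully pins down.
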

%
\section{Further Proposition and Examples}
In addition to the main theorems above,
we include a further proposition and several examples related to
${\gamma}(n, m;\mu)$ and $D(n,m;\mu)$.
We first establish several limit formulas for ${\gamma}(n, m;\mu)$
with respect to its parameters, which will be useful in illustrating
the behavior of these quantities.
\begin{prop}
For $n,m \in \mathbb{N}$ and $\mu \in \mathbb{N}_0$,
\[
\begin{aligned}
\lim_{n \to \infty} \gamma(n,m;\mu)
&= H_{\mu}^{(m)},
\\[6pt]
\lim_{m \to \infty} \gamma(n,m;\mu)
&=
\begin{cases}
1, & \text{if } \mu \geqslant 1,\\
1-\zeta(n+1), & \text{if } \mu = 0,
\end{cases}
\\[10pt]
\lim_{\mu \to \infty} \gamma(n,m;\mu)
&=
\begin{cases}
\zeta(m), & \text{if } m>1,\\[1pt]
\lim\limits_{\mu\to\infty} H_{\mu}, & \text{if } m=1.
\end{cases}
\end{aligned}
\]
\end{prop}
\begin{proof}
Each limit is a direct consequence of the identity (1.3) and Definition~(1.1).
\end{proof}
%
%
\begin{exa}[$D{}(n,m;\mu)$ for $n,m \in \mathbb{N}$, and $\mu \in \mathbb{N}_0$]
\[
\begin{array}{ll}
{D}(1,1;1) 
=1-\gamma, & ({\text{Euler}})  \\[5pt]
{D}(1,1;2) = 
3/2-\gamma/2 - \log A_0, & ({\text{Srivastava}})  \\[5pt]
{D}(1,1;3) = 
{11}/{6}-\gamma/3 - \log A_0 -2\log A_1, & ({\text{Choi--Srivastava}})  \\[5pt]
{D}(1,1;4) 
={25}/{12}-\gamma/4 - \log A_0 -3\log A_1-3\log A_2,&   \\[5pt]
{D}(1,1;0) 
=-\gamma(1,1; 0)
\;= \gamma_1- \log A_{-1}(1),&   \\[5pt]
{D}(2,1;1) 
=1-\zeta(2)+\gamma_1-\log A_{-1}(1). &   \\
\end{array}
\]
\end{exa}

\begin{exa}[$D(n,-m;\mu)$ for $n, \mu \in \mathbb{N}$, and $m \in \mathbb{N}_0$] 
\[ \begin{array}{ll} 
&{D}(1,0;1) = 1, \qquad \qquad \qquad  \qquad \quad \;\; 
({\text{Goldbach}}) \\[5pt] 
&{D}(2,0;1) = 2-\zeta(2), \\[5pt]
& 
\vspace{6pt} 
\begin{cases}
 {D}(1,-1;1) &= 1+\zeta(2), \\[5pt]
 {D}(1,-2;1) &= 1+3\zeta(2)+2\zeta(3), \\[5pt]
  {D}(1,-3;1) &= 1+7\zeta(2)+12\zeta(3)+6\zeta(4), \\[5pt]
  {D}(1,-m;1) &= \cdots , \qquad \qquad  \qquad ({\text{Adamchik--Srivastava}}) \\
   \end{cases} 
   \\
   &{D}(2,-1;1) = \Gamma(2)=1, \\[5pt]
   &{D}(3,-1;1) = \zeta(2)-\zeta(3), \\[5pt]
   &{D}(2,-2;2) = 5-\zeta(2)+2\zeta(3). 
   \end{array} 
   \]
  \end{exa} 
Using \((1.3)\) together with the preceding example, we derive closed-form formulas for
$\gamma(n,-m;1)$.
\begin{exa}
[$\gamma(n,-m;1)$ for $n \in \mathbb{N}$, and $m \in \mathbb{N}_0$]
\[
\begin{array}{l}
\gamma(1,0;1)=0, \\[5pt]
\gamma(1,-1;1)=-\zeta(2), \\[5pt]
\gamma(2,-1;1)=0, \\[5pt]
\gamma(2,-2;1)=-2\zeta(3).
\end{array}
\]
\end{exa}
%
\section{Proofs of Theorems 2.1 and 2.2} 
First, we establish the convergence of
$D(s_1, s_2; \lambda)$.
For $k \geqslant 2$, we have
\[
0<\zeta(k)-1<
\sum\limits_{m=1}^{\infty}\frac{2^m}{2^{mk}}\leqslant\frac{1}{2^{k-2}},
\]
which shows that $\zeta(k)-1$ decays exponentially as $k\rightarrow \infty$.
Hence, the Dirichlet series
\begin{equation*}
D(s_1, s_2; \lambda)= 
\sum\limits_{k=1}^{\infty}\dfrac{\zeta(s_1+k)-1}{(k+\lambda)^{s_2}}
\end{equation*}
converges absolutely for all
$s_1, s_2 \in \mathbb{C}$ and all $\lambda \in \mathbb{C}\setminus\mathbb{Z}_{<0}$
with $s_1 \notin \mathbb{Z}_{\leqslant 0}$.
Next, we derive an integral representation of $D(s_1, s_2; \lambda)$.
To begin with, we prove the following lemma, which will be used in the subsequent analysis.
\begin{lem}
The integral representation
\begin{equation}
D(s_1, s_2;\lambda) = 
\int_{0}^{\infty} 
\dfrac{t^{s_1-1}e^{-t}}{e^{t}-1}
\sum\limits_{k=1}^{\infty}\dfrac{t^k(k+\lambda)^{-s_2}}{\Gamma(s_1+k)}
dt,
\end{equation}
holds for $\Re(s_1)>0$ and $\lambda \notin \mathbb{Z}_{< 0}$.
\end{lem}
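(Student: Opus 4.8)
The plan is to insert, for each $k\ge 1$, a Mellin-type integral representation of the coefficient $\zeta(s_1+k)-1$ into the defining series \((1.1)\) and then interchange summation and integration. The classical ingredient is the identity
\[
\zeta(s)-1=\frac{1}{\Gamma(s)}\int_{0}^{\infty}\frac{t^{s-1}e^{-t}}{e^{t}-1}\,dt\qquad(\Re(s)>1),
\]
which one obtains by expanding $\dfrac{e^{-t}}{e^{t}-1}=\displaystyle\sum_{n\ge 2}e^{-nt}$ and integrating term by term via $\int_{0}^{\infty}t^{s-1}e^{-nt}\,dt=\Gamma(s)\,n^{-s}$; the term-by-term step is justified by monotone convergence, since all summands are nonnegative on $(0,\infty)$. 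Because $\Re(s_1)>0$ forces $\Re(s_1+k)>1$ for every $k\ge 1$, I may apply this identity with $s=s_1+k$ and obtain
\[
D(s_1,s_2;\lambda)
=\sum_{k=1}^{\infty}\frac{(k+\lambda)^{-s_2}}{\Gamma(s_1+k)}\int_{0}^{\infty}\frac{t^{s_1+k-1}e^{-t}}{e^{t}-1}\,dt .
\]
Interchanging the sum and the integral and recognizing $\displaystyle\sum_{k=1}^{\infty}t^{k}(k+\lambda)^{-s_2}/\Gamma(s_1+k)$ as the inner series then yields exactly \((4.1)\). Note that this inner series converges for every fixed $t>0$, since $\Gamma(s_1+k)$ grows faster than any geometric factor in $k$, while $(k+\lambda)^{-s_2}$ is only polynomial in $k$.

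The one step that needs care — and the main obstacle — is justifying the interchange, for which I would invoke Tonelli's theorem. It suffices to check that
\[
\sum_{k=1}^{\infty}\frac{|k+\lambda|^{-\Re(s_2)}}{|\Gamma(s_1+k)|}\int_{0}^{\infty}\frac{t^{\Re(s_1)+k-1}e^{-t}}{e^{t}-1}\,dt<\infty .
\]
By the displayed identity, now with real argument, the inner integral equals $\Gamma(\Re(s_1)+k)\bigl(\zeta(\Re(s_1)+k)-1\bigr)$, and $\zeta(\Re(s_1)+k)-1=\sum_{n\ge 2}n^{-\Re(s_1)-k}$ decays geometrically in $k$, in the spirit of the bound $\zeta(m)-1<2^{2-m}$ recorded at the start of this section. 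Moreover, by Stirling's formula for $|\Gamma(x+iy)|$ as $x\to\infty$ with $y$ fixed, the ratio $\Gamma(\Re(s_1)+k)/|\Gamma(s_1+k)|$ grows at most polynomially in $k$, and $|k+\lambda|^{-\Re(s_2)}$ is polynomial in $k$ as well. Hence the general term of the series is $O\!\bigl(k^{A}2^{-k}\bigr)$ for a suitable constant $A$, the series converges, Tonelli applies, and \((4.1)\) follows.

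Finally, I would record explicitly that the integrand in \((4.1)\) is genuinely integrable on $(0,\infty)$, which is in any case implicit in the Tonelli estimate above: near $t=0$ it behaves like $t^{\Re(s_1)-1}$, because $e^{-t}/(e^{t}-1)\sim t^{-1}$ while the inner series is $O(t)$, so integrability there uses $\Re(s_1)>0$; near $t=\infty$, the factor $e^{-t}/(e^{t}-1)\sim e^{-2t}$ dominates the at most exponential growth of the inner series.
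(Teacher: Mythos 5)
Your proposal is correct and follows essentially the same route as the paper: insert the Mellin/Hurwitz-zeta integral representation of $\zeta(s_1+k)-1$ into the defining series and interchange summation and integration via Fubini--Tonelli. The only (minor) difference is in how the interchange is justified --- you sum the integrals of absolute values termwise, using the geometric decay of $\zeta(\Re(s_1)+k)-1$ and Stirling, whereas the paper bounds the inner series pointwise in $t$ and dominates the integrand; both are valid.
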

\begin{proof}
Using the integral representation of the Hurwitz zeta function
(see, e.g., \cite[p.~266, \S~13.12]{whittaker-watson}), we have
\[
\zeta(s_1+k)-1=\zeta(s_1+k, 2)
=\frac{1}{\Gamma(s_1+k)}\int_{0}^{\infty} 
\frac{t^{s_1+k-1}e^{-t}}{e^t-1}\,dt.
\]
Substituting this into (1.1) and 
interchanging the order of summation and integration, 
we obtain a part of the integrand that will be estimated below:
\[
\begin{array}{rll}
\displaystyle
\sum_{k=1}^{\infty}
\left| \frac{t^k (k+\lambda)^{-s_2}}{\Gamma(s_1+k)} \right|
&< M_1\, t,
& \text{if } t \in [0,1], \\[4pt]
\displaystyle
\sum_{k=1}^{\infty}
\left| \frac{t^k (k+\lambda)^{-s_2}}{\Gamma(s_1+k)} \right|
&< M_2\, e^{(1+\epsilon)t},
& \text{if } t \in [1,\infty),
\end{array}
\]
where $M_1$ and $M_2$ are positive constants depending only on $s_2$ and $\lambda$,
and $\epsilon$ is a small positive constant.
Since the integrand is dominated by an absolutely integrable function on $[0,\infty)$,
the interchange of summation and integration is justified by Fubini's theorem,
and hence we obtain the expression (4.1).
This completes the proof of the lemma.
\end{proof}
%
\begin{proof}[Proof of Theorem 2.1]
We start with the integral representation of $\Gamma(z)$ along the Hankel contour (see, e.g., \cite[1.6 (2)]{erdelyi}):
\begin{equation}
\frac{1}{\Gamma(z)}
=\frac{e^{\pi i}}{2\pi i}\int_{\infty}^{(+0)} 
{(-t)^{-z}e^{-t}}\,dt,
\end{equation}
where $\arg (-t)=-\pi +\arg t$.  
After changing the variable $t$ to $(k+\lambda)w$ and shifting the path of integration back to the Hankel contour, we obtain 
\begin{equation}
\frac{1}{(k+\lambda)^{s_2}}
=\frac{\Gamma(1-s_2)}{2\pi i\, e^{\pi i s_2}}\int_{\infty}^{(+0)} 
{w^{s_2-1}e^{-(k+\lambda)w}}\,dw.
\end{equation}
In a similar manner, (4.2) yields the following integral representation of the Hurwitz zeta function
(see, e.g., \cite[p.~266, \S~13.13]{whittaker-watson}):
\begin{equation}
\zeta(s_1+k)-1=\zeta(s_1+k, 2)
=\frac{\Gamma(1-s_1-k)}{2\pi i\, e^{\pi i (s_1+k)}}\int_{\infty}^{(+0)} 
\frac{v^{s_1+k-1}e^{-v}}{e^v-1}\,dv.
\end{equation}
Substituting (4.4) into (1.1) and interchanging the order of summation and integration,
which is justified by a similar argument to that in the proof of Lemma~4.1,
then substituting (4.3) and applying the reflection formula
$\Gamma(s)\Gamma(1-s)=\pi / \sin(\pi s)$,
we obtain the desired integral representation~(2.1).
\par
In (2.1), possible poles arise when $s_1 \in \mathbb{Z}$ and $s_2 \in \mathbb{N}$. 
However, when $s_1, s_2 \in \mathbb{N}$, the $v$- and $w$-integrals vanish respectively, and the apparent singularities are canceled. 
This observation agrees with the result of Lemma~4.1. 
Hence, it defines a holomorphic function on the $(s_1, s_2)$-plane except for $s_1 \in \{0, -1, -2, \dots\}$.
\end{proof}
%
\begin{proof}[Proof of Theorem 2.2]
Let $n \in \mathbb{N}$ and $m, \mu \in \mathbb{N}_0$. 
First, suppose that $m>0$. Then we have
\begin{equation*}
\begin{array}{ll}
 D(n, m; \mu) 
 & 
=
\sum\limits_{k=2}^{\infty}
\sum\limits_{l=1}^{\infty}  
 \dfrac{1}{(l+\mu)^m} {\left( \dfrac{1}{k}\right)^{n+l}} 
 = 
\sum\limits_{k=2}^{\infty}
  {\left( \dfrac{1}{k}\right)^{n-\mu }} 
\!\!
\sum\limits_{l=\mu+1}^{\infty}  
 \dfrac{1}{l^m} {\left( \dfrac{1}{k}\right)^{l}} 
 \\[15pt] 
 & 
 =
\sum\limits_{k=2}^{\infty}
  {\left( \dfrac{1}{k}\right)^{n-\mu }} 
 \Big\{
 \mathrm{Li}_m\left( \dfrac{1}{k}\right)
 -\sum\limits_{l=1}^{\mu}  
 \dfrac{1}{l^m} {\left( \dfrac{1}{k}\right)^{l}} 
 \Big\}.
\end{array}
\end{equation*}
 This is equivalent to
 \begin{equation*}
\sum\limits_{l=1}^{\mu} \dfrac{1}{l^m}
 +
 \lim\limits_{N \to \infty}
 \left\{ 
  \sum\limits_{k=2}^{N}
 \mathrm{Li}_m\left( \dfrac{1}{k}\right)
  {\left( \dfrac{1}{k}\right)^{n-\mu }} 
 -\sum\limits_{l=1}^{\mu}  
 \dfrac{1}{l^m}
  \sum\limits_{k=1}^{N}
  {\left( \dfrac{1}{k}\right)^{n+l-\mu}} 
 \right\}.
 \end{equation*}
Here, in the case $\mu = 0$, the summation over $l$ is understood to be absent.
For the case $m=0$, we see that $D(n, m; \mu) $ is equal to
 \begin{equation*}
 \sum\limits_{l=1}^{\infty}\left\{\zeta(n+l)-1\right\}
 = \sum\limits_{l=2}^{\infty}\left\{\zeta(l)-1\right\} 
   -\sum\limits_{l=2}^{n}  \left\{  \zeta(l) -1\right\} 
   =n-\sum\limits_{l=2}^{n} \zeta(l).
\end{equation*}
In the last equality, we have used formula (1.4) due to Goldbach.
This completes the proof of Theorem~2.2.
\end{proof}
\section{Proof of Theorem 2.3} 
In this section, we prove Theorem~2.3.
The argument is technically involved and unfolds through several steps.
Our approach repeatedly uses changes of summation indices,
rearrangements of double sums, and the recurrence formula for the Bernoulli numbers.
Among these ingredients, a key step is the use of a recurrence relation
for the Bernoulli numbers involving two different types of sums.
Since its proof is technical and relies on Agoh's theorem
\cite[Theorem~3.1]{agoh}, we establish this relation first.
\begin{prop}
Let $m \geqslant 2$. 
Then we have the following identity:
\begin{equation}
\sum\limits_{j=0}^{m-1}
\left[
 \dfrac{(-1)^{j}}{m-j}
+
\binom{m}{j}H_j
\right]
B_j=0.
\end{equation}
More generally, for integers $m > k \geqslant 1$,
\begin{equation}
\begin{aligned}
&
\sum\limits_{j=0}^{m-k}
\left[
\dfrac{(-1)^{j} }{(m-j-k+1)(j+k)} \dbinom{j+k}{j}
 \right.
 \\
 & \qquad \qquad 
+
  \left.
  \dfrac{1}{m+1}\dbinom{m+1}{k}
\dbinom{m-k+1}{j}H_{j+k-1}
 \right]B_j
 =0.
\end{aligned}
\end{equation}
\end{prop}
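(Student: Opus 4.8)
The plan is to note first that \((5.1)\) is exactly the case $k=1$ of \((5.2)\): when $k=1$ one has $\binom{j+k}{j}=j+1$, $\binom{m+1}{k}=m+1$ and $H_{j+k-1}=H_j$, so the bracket in \((5.2)\) collapses to $(-1)^j/(m-j)+\binom{m}{j}H_j$ and the summation range becomes $0\le j\le m-1$. Hence it suffices to establish \((5.2)\) for all integers $m>k\ge 1$ — or, equivalently, to prove \((5.1)\) first as a model and then carry the extra parameter $k$ through the same steps. Fix $m>k\ge 1$, set $\ell:=m-k+1$ (so $\ell\ge 2$), and denote by $U$ the sum inside the bracket of \((5.2)\) carrying the factor $(-1)^j$ and by $V$ the sum carrying the factor $H_{j+k-1}$; the claim is $U+V=0$.

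To treat $V$, I would write $H_{j+k-1}=\sum_{i=1}^{j+k-1}i^{-1}$, interchange the order of the $i$- and $j$-summations, and then collapse the inner partial sums of the form $\sum_{j\ge i'}\binom{\ell}{j}B_j$ using the classical recurrence $\sum_{j=0}^{\ell-1}\binom{\ell}{j}B_j=0$ (legitimate since $\ell\ge 2$) together with the identity $\sum_{j=0}^{\ell}\binom{\ell}{j}B_j x^j=x^{\ell}B_{\ell}(1/x)$ for the Bernoulli polynomial. This rewrites $V$ as a single sum whose coefficients have the same rational shape as those of $U$. (An equivalent route is to use $H_{j+k-1}=\int_{0}^{1}\frac{1-x^{j+k-1}}{1-x}\,dx$, push the finite $j$-sum inside, recognize $x^{\ell}B_{\ell}(1/x)-B_{\ell}$ in the integrand, and integrate termwise once the removable singularity at $x=1$ has been cancelled.)

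To treat $U$, I would apply the partial-fraction splitting
\[
\frac{1}{(m-j-k+1)(j+k)}=\frac{1}{m+1}\left(\frac{1}{m-j-k+1}+\frac{1}{j+k}\right),
\]
the binomial simplification $\binom{j+k}{j}/(j+k)=\tfrac1k\binom{j+k-1}{j}$, and the Vandermonde-type identity $\binom{m+1}{k}\binom{m-k+1}{j}=\binom{m+1}{j}\binom{m+1-j}{k}$, in order to bring $U$ into the normalized form already obtained for $V$. After these reductions the equality $U+V=0$ amounts to a finite recurrence relating the two ``types'' of Bernoulli sums in the statement, and this is precisely where Agoh's theorem \cite[Theorem~3.1]{agoh} is invoked: combined with one or two further index reflections $j\mapsto\ell-1-j$, it yields $U+V=0$. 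Setting $k=1$ then recovers \((5.1)\).

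The step I expect to be the main obstacle is the bookkeeping in the middle: after the two reductions the binomial and rational coefficients on the two sides must be matched precisely enough that the hypotheses of Agoh's theorem are met, while keeping careful track of which terms vanish by the parity property $B_{2r+1}=0$ for $r\ge 1$ and which vanish by the recurrence $\sum_{j}\binom{n}{j}B_j=0$. A secondary difficulty is uniformity in $k$: for $k\ge 2$ the manipulations generate boundary contributions — the $j=0$ and $j=m-k$ terms, and the endpoint $i=j+k-1$ of the harmonic sum — that are absent when $k=1$ and must be accounted for separately.
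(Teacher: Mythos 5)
Your proposal shares the paper's top-level strategy --- prove \((5.2)\) and obtain \((5.1)\) as the case $k=1$ (your verification of that specialization is correct), with Agoh's theorem \cite[Theorem~3.1]{agoh} as the key external input --- but there is a genuine gap at the decisive step. Agoh's theorem is an umbral statement: given a polynomial identity $h(x)=f(x+1)+g(x)$, it yields $f(B(x))+g(B(x))=h(B(x))-f'(x)$. To apply it you must first exhibit polynomials $f,g$ whose Bernoulli evaluations at $x=0$ produce the two sums $U$ and $V$, \emph{and} compute $h(x)=f(x+1)+g(x)$ in closed form so that $h(B(x))$ is tractable. The paper obtains this closed form, namely $f(x+1)+g(x)=\tfrac{1}{m+1}\binom{m+1}{k}H_m\bigl[(x+1)^{m-k+1}-x^{m-k+1}\bigr]$, by taking the $(k-1)$-th derivative of the identity $\sum_{r=1}^{m}\binom{m}{r}x^{m-r}(1-x)^{r}/r=\sum_{r=0}^{m-1}x^{r}/(m-r)-H_{m}x^{m}$ from Agoh's Lemma~2.3(iii). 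Your plan contains no analogue of this construction: ``invoke Agoh's theorem, combined with one or two index reflections'' is not an executable step without the polynomial identity it acts on. Moreover, your proposed preprocessing of $V$ --- expanding $H_{j+k-1}=\sum_i i^{-1}$ and swapping --- produces partial sums $\sum_{j<a(i)}\binom{m-k+1}{j}B_j$ with $i$-dependent cutoffs; these do not collapse under the full recurrence $\sum_{j=0}^{\ell-1}\binom{\ell}{j}B_j=0$ and there is no evident reason they acquire ``the same rational shape'' as $U$.

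A secondary point you should anticipate: the umbral evaluation at $x=0$ naturally yields the \emph{unsigned} version of the bracket summing to $-1/(m-k)$, not $0$. The vanishing in \((5.2)\) only emerges afterwards, because the sign factor $(-1)^j$ differs from $1$ exactly at $j=1$ (all other odd $B_j$ vanish), and the resulting correction $2\cdot\tfrac{1}{(m-k)(k+1)}\binom{k+1}{1}\cdot\tfrac12=\tfrac{1}{m-k}$ cancels the right-hand side. Your closing remark about ``parity bookkeeping'' gestures at this but does not identify that the $(-1)^j$ is essential precisely at $j=1$ and is what converts a nonzero identity into the stated one. As written, the proposal is a plausible outline of where the proof should go, but the central mechanism --- the polynomial identity feeding Agoh's theorem --- is missing.
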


\begin{proof} 
We prove the general identity~(5.2), from which~(5.1) follows as the special case $k=1$.
The identity~(5.1) itself is essentially equivalent to \cite[Corollary~3.3(iii)]{agoh}; see also \cite[Theorem~7 (2.17)]{qin-lu}, upon using the facts that $B_1=-\tfrac12$ and $B_{2n+1}=0$ for $n\in\mathbb N$.

Heuristically, \((5.2)\) may be viewed as arising from the $(k-1)\mbox{-th}$ derivative of the polynomial identity associated  with~(5.1).
The proof of~(5.2) follows the arguments of \cite[Lemma~2.2, 2.3(iii)]{agoh}, applies Agoh's theorem~\cite[Theorem~3.1]{agoh},
and thereby yields results analogous to
\cite[Corollary~3.2(iii), 3.3(iii)]{agoh}.

We start from the well-known identity
\[
\sum_{j=1}^{n}\frac{(-1)^j}{j}\binom{n}{j}=-H_n,
\]
from which it follows that
\begin{equation*}
\sum_{j=1}^{n}\frac{(-1)^j}{j}\binom{m}{m-j}\binom{m-j}{m-n}
=-\binom{m}{n}H_n.
\end{equation*}
Here, the left-hand side can be interpreted as the coefficient of $x^n$ in
\[
\sum_{r=0}^{m-1}\binom{m}{r}\frac{(x+1)^r(-x)^{m-r}}{m-r}.
\]
We are therefore led to
\begin{equation*}
\begin{aligned}
\frac{1}{k}\binom{j+k-1}{j}
&\sum_{l=1}^{j+k-1}
\frac{(-1)^l}{l}\binom{m}{l}\binom{m-l}{m-j-k+1} \\
&=
-\frac{1}{k}\binom{j+k-1}{j}\binom{m}{j+k-1}H_{j+k-1},
\end{aligned}
\end{equation*}
where the left-hand side can be identified with the coefficient of $x^{j}$ in
\begin{equation*}
\frac{1}{k!}\,
\frac{d^{\,k-1}}{dx^{k-1}}
\left(
\sum_{r=0}^{m-1}\binom{m}{r}\frac{(x+1)^r(-x)^{m-r}}{m-r}
\right).
\end{equation*}
After shifting $x$ to $x-1$ and replacing $r$ by $m-r$, we obtain
\begin{equation*}
\begin{aligned}
&\frac{1}{k!}\,
\frac{d^{\,k-1}}{dx^{k-1}}
\left(
\sum_{r=1}^{m}\binom{m}{r}\frac{x^{m-r}(1-x)^{r}}{r}
\right) \\
&= 
-
\sum\limits_{j=0}^{m-k+1} \frac{1}{k}\binom{j+k-1}{j}\binom{m}{j+k-1}H_{j+k-1}
(x-1)^{j}.
\end{aligned}
\end{equation*}
Substituting the identity
\begin{equation*}
\sum_{r=1}^{m}\binom{m}{r}\frac{x^{m-r}(1-x)^{r}}{r}
=
\sum_{r=0}^{m-1} \frac{x^r}{m-r} - H_{m}x^{m},
\end{equation*}
which is given in \cite[Proof of Lemma~2.3(iii)]{agoh},
into the above equality, and replacing $r$ by $j+k-1$,
together with a rearrangement of the binomial coefficients
$\binom{m}{j+k-1}\binom{j+k-1}{j}$,
we obtain
\begin{equation}
\begin{aligned}
& \sum\limits_{j=0}^{m-k}\frac{1}{(m-j-k+1)(j+k)} \binom{j+k}{j}x^j
\\
& \qquad + \sum\limits_{j=0}^{m-k+1}\dfrac{1}{m+1}\binom{m+1}{k}
\binom{m-k+1}{j}H_{j+k-1}(x-1)^j,
\\
& \qquad = \frac{1}{m+1}\binom{m+1}{k}H_{m}x^{m-k+1}.
\end{aligned}
\end{equation}
We next introduce
\begin{equation*}
\begin{aligned}
& f(x)=\sum\limits_{j=0}^{m-k}\dfrac{1}{(m-j-k+1)(j+k)} \dbinom{j+k}{j}x^j,
\\
& g(x)=\sum\limits_{j=0}^{m-k}\dfrac{1}{m+1}\dbinom{m+1}{k}
\dbinom{m-k+1}{j}H_{j+k-1}x^j,
\\
\end{aligned}
\end{equation*}
and define $h_1(x):=f(x+1)+g(x)$. By (5.3), we obtain
\begin{equation*}
\begin{aligned}
h_1(x)=\dfrac{1}{m+1}\dbinom{m+1}{k}H_{m}
\left[ (x+1)^{m-k+1}-x^{m-k+1} \right].
\end{aligned}
\end{equation*}
It is readily verified that
\begin{equation*}
\begin{aligned}
\dfrac{d}{dx}f(x)=\sum\limits_{j=1}^{m-k}\dfrac{1}{m-j-k+1} \dbinom{j+k-1}{j-1}x^{j-1}.
\end{aligned}
\end{equation*}
Under these preparations, we now apply Agoh's theorem \cite[Theorem 3.1]{agoh}, which asserts that
\begin{equation*}
f(B(x))+g(B(x))=h_1(B(x))-\dfrac{d}{dx}f(x)
\quad
\text{with} \quad 
B(x)^k=B_k(x),
\end{equation*}
where $B_k(x)$ is the $k$th Bernoulli polynomial.
It follows that
\begin{equation}
\begin{aligned}
& \sum\limits_{j=0}^{m-k}\dfrac{1}{(m-j-k+1)(j+k)} \dbinom{j+k}{j}B_j(x)
\\
& \qquad \qquad +
\sum\limits_{j=0}^{m-k}\dfrac{1}{m+1}\dbinom{m+1}{k}
\dbinom{m-k+1}{j}H_{j+k-1}B_j(x)
\\
& = \dfrac{1}{m+1}\dbinom{m+1}{k}H_{m} \,(m-k+1)x^{m-k}\\
& \qquad \qquad
 - \sum\limits_{j=1}^{m-k}\dfrac{1}{m-j-k+1} \dbinom{j+k-1}{j-1}x^{j-1}.
\end{aligned}
\end{equation}
Here we have used the standard identity for Bernoulli polynomials
($n\in\mathbb N$):
\[
\left( B(x)+1\right)^n-B(x)^n
=
\sum\limits_{j=0}^{n-1}\dbinom{n}{j}B_j(x)
=nx^{n-1}.
\]
Noting that $m>k$, evaluating (5.4) at $x=0$ yields
\begin{equation*}
\begin{aligned}
&
\sum\limits_{j=0}^{m-k}
\left[
\dfrac{1}{(m-j-k+1)(j+k)} \dbinom{j+k}{j}
 \right.
 \\
 & \qquad \qquad 
+
  \left.
  \dfrac{1}{m+1}\dbinom{m+1}{k}
\dbinom{m-k+1}{j}H_{j+k-1}
 \right]B_j
 =-\dfrac{1}{m-k}.
\end{aligned}
\end{equation*}
Using $B_1=-\tfrac12$ and $B_{2n+1}=0$ for $n\in\mathbb N$,
we complete the proof of (5.2).
\end{proof}
We now prove the following lemma for the Bendersky constant 
$A_m$,  
which underlies the proof of Theorem~2.3.
\begin{lem}
Let $n, m \in \mathbb{N}$.
For the function $P(n,m)$ defined in (2.3), 
\begin{equation*}
\sum\limits_{j=0}^{m-1}\binom{m}{j}P(n, j)
=
(n+1)^m\log n
-\dfrac{\log n}{m+1}
-\sum\limits_{l=1}^{m}\dfrac{1}{l} \sum\limits_{k=1}^{n}{k^{m-l}}.
\end{equation*}
\end{lem}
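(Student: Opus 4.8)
\emph{Plan.} The plan is to recognise $p(n,j)$ as the \emph{divergent part} of the asymptotic expansion of $\sum_{k=1}^{n}k^{j}\log k$. By the very definition of the Bendersky constant,
\[
\sum_{k=1}^{n}k^{j}\log k\;=\;p(n,j)+\log A_{j}+o(1)\qquad(n\to\infty),
\]
where $p(n,j)$, read off from $(2.3)$, is a finite $\mathbb{Q}$-linear combination of the functions $n^{a}$, $n^{a}\log n$ ($a\ge 1$) and $\log n$, and in particular has no constant term. Multiplying by $\binom{m}{j}$, summing over $0\le j\le m-1$, and using $\sum_{j=0}^{m-1}\binom{m}{j}k^{j}=(k+1)^{m}-k^{m}$, one obtains
\[
\sum_{k=1}^{n}\bigl\{(k+1)^{m}-k^{m}\bigr\}\log k
=\sum_{j=0}^{m-1}\binom{m}{j}p(n,j)+\sum_{j=0}^{m-1}\binom{m}{j}\log A_{j}+o(1),
\]
so it suffices to extract the divergent part of the left-hand side.

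Shifting the summation index gives
\[
\sum_{k=1}^{n}\bigl\{(k+1)^{m}-k^{m}\bigr\}\log k
=(n+1)^{m}\log n-\sum_{k=2}^{n}k^{m}\log\frac{k}{k-1},
\]
and, since $\log\frac{k}{k-1}=\sum_{l\ge 1}\frac{1}{l\,k^{l}}$ for $k\ge 2$,
\[
\sum_{k=2}^{n}k^{m}\log\frac{k}{k-1}
=\sum_{l=1}^{m}\frac{1}{l}\sum_{k=2}^{n}k^{m-l}
+\frac{1}{m+1}\bigl(H_{n}-1\bigr)
+\sum_{l\ge m+2}\frac{1}{l}\sum_{k=2}^{n}k^{m-l}.
\]
In the first term each inner sum differs from $\sum_{k=1}^{n}k^{m-l}$ by a constant, and $\sum_{k=1}^{n}k^{m-l}$ has no constant term, so its divergent part is $\sum_{l=1}^{m}\frac{1}{l}\sum_{k=1}^{n}k^{m-l}$; the middle term contributes $\frac{\log n}{m+1}$ because $H_{n}=\log n+\gamma+o(1)$; the last term converges as $n\to\infty$ and contributes nothing. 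Hence the divergent part of $\sum_{k=1}^{n}\{(k+1)^{m}-k^{m}\}\log k$ is $(n+1)^{m}\log n-\frac{\log n}{m+1}-\sum_{l=1}^{m}\frac{1}{l}\sum_{k=1}^{n}k^{m-l}$, which is precisely the claimed value of $\sum_{j=0}^{m-1}\binom{m}{j}p(n,j)$. (Comparing the \emph{constant} parts yields, incidentally, the relation $\sum_{j=0}^{m-1}\binom{m}{j}\log A_{j}=H_{m}-\frac{\gamma-1}{m+1}-\sum_{l\ge m+2}\frac{1}{l}\bigl(\zeta(l-m)-1\bigr)$.)

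The point requiring care is to make the notion of ``divergent part'' rigorous: one invokes the Euler--Maclaurin formula for $x^{j}\log x$ to an order $2R$ with $2R>j+1$, so that the remainder integral converges and $\sum_{k\le n}k^{j}\log k$ really is of the form (positive powers and $\log$ terms) $+$ (constant) $+\,o(1)$; additivity of this decomposition, together with the fact that a nonzero finite combination of the $n^{a}$, $n^{a}\log n$ and $\log n$ ($a\ge 1$) cannot be bounded, makes the divergent part well defined, which is all the argument uses. An entirely elementary alternative is to split $p(n,j)=u(n,j)\log n-w(n,j)$ straight from $(2.3)$, to note that $u(n,j)=B_{j+1}(n+1)/(j+1)$ (compare the Bernoulli-number expansion of $B_{j+1}(x)/(j+1)$ and use $B_{j+1}(x+1)-B_{j+1}(x)=(j+1)x^{j}$), whence
\[
\sum_{j=0}^{m-1}\binom{m}{j}u(n,j)=\frac{1}{m+1}\sum_{i=1}^{m}\binom{m+1}{i}B_{i}(n+1)=(n+1)^{m}-\frac{1}{m+1},
\]
accounting for the two logarithmic terms; the residual identity $\sum_{j=0}^{m-1}\binom{m}{j}w(n,j)=\sum_{l=1}^{m}\frac{1}{l}\sum_{k=1}^{n}k^{m-l}$ is then settled by equating the coefficient of each power $n^{a}$ on the two sides, the Bernoulli--harmonic sums that arise being exactly those governed by identity $(5.2)$ of Proposition~5.1 with $k=a$. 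In this second approach the coefficient bookkeeping is the main obstacle.
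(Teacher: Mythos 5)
Your proof is correct, but it reaches the identity by a genuinely different route from the paper. The paper's proof is a purely algebraic computation: it splits $\sum_{j=0}^{m-1}\binom{m}{j}p(n,j)$ into four sums $S_1,\dots,S_4$ according to the terms of \((2.3)\), shows $S_1+S_3=(n+1)^m\log n-\frac{\log n}{m+1}$ via the binomial theorem and the Bernoulli recurrence, and then proves $S_2-S_4=\sum_{l=1}^{m}\frac1l\sum_{k=1}^{n}k^{m-l}$ by matching the coefficient of each power $n^k$ on both sides --- which is precisely where the two-sum Bernoulli identity \((5.2)\) of Proposition~5.1 (and hence Agoh's theorem) is invoked. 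Your sketched ``elementary alternative'' at the end is essentially this computation (your $u(n,j)\log n$ plays the role of $S_1+S_3$, your $w$ that of $S_2-S_4$), so you have correctly located where Proposition~5.1 enters. Your main argument, by contrast, bypasses Proposition~5.1 entirely: you characterize $\sum_{j}\binom{m}{j}p(n,j)$ as the divergent part of $\sum_{k\le n}\{(k+1)^m-k^m\}\log k$, compute that divergent part directly by the index shift and the expansion $\log\frac{k}{k-1}=\sum_{l\ge1}\frac{1}{lk^l}$, and conclude by uniqueness of the decomposition into (unbounded basis functions) $+$ constant $+\,o(1)$, using that a nonzero combination of $n^a$, $n^a\log n$, $\log n$ is unbounded. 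The inputs this requires --- existence of the Bendersky limits and the observation that $p(n,j)$ has no constant term --- are classical and already presupposed elsewhere in the paper, so there is no circularity; and since both sides of the lemma lie in the span of those basis functions, your asymptotic comparison does yield the exact identity for every $n$. What your route buys is brevity, independence from Proposition~5.1 and Agoh's theorem, and, as a free by-product, the closed form of the constant term $\sum_{j}\binom{m}{j}\log A_j$, which is exactly what the proof of Theorem~2.3 ultimately extracts from this lemma; what the paper's route buys is a formal polynomial identity established without any appeal to asymptotics, together with the Bernoulli identity \((5.2)\), which may be of independent interest.
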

\begin{proof}
By the definition of $P(n,m)$, we have
\begin{equation*}
\sum\limits_{j=0}^{m-1}\dbinom{m}{j}P(n, j) 
= S_1(n,m)-S_2(n,m)+S_3(n,m)+S_4(n,m),
\\[-10pt]
\end{equation*}
\begin{equation*}
\begin{array}{rl}
\text{where} \quad
 S_1(n,m) &=  
\sum\limits_{j=0}^{m-1}\dbinom{m}{j} 
\left(\dfrac{n^{j+1}}{j+1} + \dfrac{n^j}{2}\right) \log n,
\\[12pt]
S_2(n,m) &= \sum\limits_{j=0}^{m-1}\dbinom{m}{j} \dfrac{n^{j+1}}{(j+1)^2},
\\[12pt]
S_3(n,m) &= \sum\limits_{j=1}^{m-1}
\dfrac{m!}{(m-j)!}\sum\limits_{r=1}^{j}
\dfrac{B_{r+1}}{(r+1)!}
\dfrac{n^{j-r}}{(j-r)!}
\log n,
\\[12pt]
S_4(n,m) &= \sum\limits_{j=1}^{m-1}
\dfrac{m!}{(m-j)!}\sum\limits_{r=1}^{j}
\dfrac{B_{r+1}}{(r+1)!}
\dfrac{n^{j-r}}{(j-r)!}
\sum\limits_{i=1}^{r}\dfrac{1-\delta_{j,r}}{j-i+1}.
\end{array}
\end{equation*}
Here, when $m = 1$, the summations $S_3$ and $S_4$ are understood to be zero.

First, we evaluate $S_1$ and $S_3$ separately. 
By applying the binomial theorem, we obtain
\begin{equation*}
 S_1(n,m) 
 = 
(1+n)^{m}\log n+
\sum\limits_{j=1}^{m-1}\dbinom{m}{j} 
\left(\dfrac{1}{m-j+1} - \dfrac{1}{2}\right) n^j \log n -\dfrac{\log n}{2}.
\end{equation*}
%
In the simplification of $S_3(n,m)$, we proceed in three steps.
In the first step, we change the index by setting $k=j-r$, 
which yields $k=0,1,\dots, m-1-r$. 
In the second step, we interchange the order of summations by using the 
elementary identity 
 $  \binom{m}{r}\binom{m-r}{k} = \binom{m}{k}\binom{m-k}{r}.  $
In the third step, we apply the identity
\(\sum_{j=0}^{k}\binom{k+1}{j}B_j=0\).
After these transformations, the sum can be rewritten as follows:
\begin{equation*}
\begin{array}{ll}
 S_3(n,m) 
 &  = 
 \sum\limits_{r=1}^{m-1}
\dfrac{B_{r+1}}{r+1} \sum\limits_{k=0}^{m-1-r}
\dbinom{m}{r} \dbinom{m-r}{k}
n^{k}\log n
\\[15pt]
 &  = 
 \sum\limits_{k=0}^{m-2}
\dbinom{m}{k}  \sum\limits_{r=1}^{m-1-k}
\dfrac{B_{r+1}}{r+1} 
\dbinom{m-k}{r}
n^{k}\log n
\\[15pt]
 &  = 
 \sum\limits_{k=0}^{m-2}
\dbinom{m}{k}  
\dfrac{n^{k}\log n}{m-k+1}
\sum\limits_{j=2}^{m-k}
\dbinom{m-k+1}{j}{B_{j}}
\\[15pt]
 &  = 
 \sum\limits_{k=0}^{m-2}
\dbinom{m}{k}  
\dfrac{n^{k}\log n}{m-k+1}
\left\{
-B_0-(m-k+1)B_1
\right\}
\\[15pt]
 &  = 
 \sum\limits_{k=0}^{m-2}
\dbinom{m}{k}  
\left(-\dfrac{1}{m-k+1}
+\dfrac12
\right)
n^{k}\log n.
\\[15pt]
\end{array}
\end{equation*}
%
We thus arrive at the following first key identity:
\begin{equation}
 S_1(n,m) + S_3(n,m) 
 = 
(1+n)^{m}\log n-\dfrac{\log n}{m+1}.
\end{equation}
Next, we establish the second key identity:
\begin{equation}
 S_2(n,m) - S_4(n,m) 
 = 
\sum\limits_{l=1}^{m}\dfrac{1}{l} \sum\limits_{k=1}^{n}{k^{m-l}}.
\end{equation}
For simplicity, we denote the right-hand side of the above equality by $\mathcal{H}_{n,m}$.
Before proving the second identity (5.6), we need a few preliminary steps. 
To begin with, note that the following equality is obtained by first applying Faulhaber's formula and then changing the summation variables.
\begin{equation*}\begin{array}{ll}
\mathcal{H}_{n,m}
 & =
\sum\limits_{r=0}^{m-1}\dfrac{1}{m-r} 
\sum\limits_{k=1}^{n}{k^{r}}
\\[15pt]
 & =
\sum\limits_{r=0}^{m-1}\dfrac{1}{(m-r)(r+1)} \sum\limits_{i=0}^{r}
\dbinom{r+1}{i}(-1)^{\delta_{i,1}}B_i \,n^{r+1-i}
\\[15pt]
 & = 
\sum\limits_{k=1}^{m}\sum\limits_{j=0}^{m-k}
\dfrac{(-1)^{\delta_{j,1}}B_j }{(m-j-k+1)(j+k)} 
\dbinom{j+k}{j}n^{k}.
\end{array}
\end{equation*}
%
For $S_2(n,m)$, we can write
\begin{equation}
 S_2(n,m)   = \dfrac{1}{m+1}
\sum\limits_{k=1}^{m}\dbinom{m+1}{k} \dfrac{n^{k}}{k}.
\end{equation}
It is easily verified that the coefficients of $n^{m}$ and $n^{m-1}$ in 
$\mathcal{H}_{n,m}$ are 
${1}/{m}$ and ${m}/{2(m-1)}$, respectively. 
These coincide with the coefficients of $n^{m}$ and $n^{m-1}$ in $S_2(n,m)$. 
Since $S_4(n,m)$ is a polynomial in $n$ of degree at most $m-2$, it suffices to show that the coefficients of $n^{k}$ for $k=1,\dots,m-2$ 
in $\mathcal{H}_{n,m} + S_4(n,m)$ agree with the corresponding coefficients of $n^{k}$ in $S_2(n,m)$.
For $S_4(n,m)$, by applying the same approach as in the simplification of $S_3(n,m)$,  
we rewrite this summation as follows:
\begin{equation*}
\begin{array}{ll}
 S_4(n,m) 
 &  = 
 \sum\limits_{r=1}^{m-1}
\dfrac{B_{r+1}}{r+1} \sum\limits_{k=0}^{m-1-r}
\dbinom{m}{r} \dbinom{m-r}{k}
n^{k}\,\sum\limits_{i=1}^{r}\dfrac{1-\delta_{k,0}}{r+k-i+1}
\\[15pt]
 &  = 
 \sum\limits_{k=1}^{m-2}
\dbinom{m}{k}  \sum\limits_{r=1}^{m-1-k}
\dfrac{B_{r+1}}{r+1} 
\dbinom{m-k}{r}
n^{k}\,\sum\limits_{i=1}^{r}\dfrac{1}{r+k-i+1}
\\[15pt]
 &  = 
 \sum\limits_{k=1}^{m-2}
\dbinom{m}{k}  
\dfrac{n^{k}}{m-k+1}
\sum\limits_{j=2}^{m-k}
\dbinom{m-k+1}{j}{B_{j}}
\sum\limits_{i=1}^{j-1}\dfrac{1}{j+k-i}.
\end{array}
\end{equation*}
Hence, the coefficients of $n^{k}$ in $\mathcal{H}_{n,m} + S_4(n,m)$ are given by
\begin{equation*}
\begin{aligned}
&
\sum\limits_{j=0}^{m-k}
\frac{(-1)^{\delta_{j,1}}B_j }{(m-j-k+1)(j+k)} 
\binom{j+k}{j}
\\
 & \qquad +
\dbinom{m}{k}  
\frac{1}{m-k+1}
\sum\limits_{j=2}^{m-k}
\binom{m-k+1}{j}{B_{j}}\left(H_{j+k-1}-H_{k}\right).
\end{aligned}
\end{equation*}
By the recurrence formula $ \sum_{j=0}^{m-k}\binom{m-k+1}{j}{B_{j}} =0$, and noting that $H_{k}-H_{k-1}=1/k$, the expression above can be rewritten as
\begin{equation*}
\begin{aligned}
&
\sum\limits_{j=0}^{m-k}
\left[
\frac{(-1)^{\delta_{j,1} } }{(m-j-k+1)(j+k)} \binom{j+k}{j}
 \right.
 \\
 & \quad 
+
  \left.
  \frac{1}{m+1}\binom{m+1}{k}
\binom{m-k+1}{j}H_{j+k-1}
 \right]B_j
 +
\frac{1}{k(m+1)}
\binom{m+1}{k}.
\end{aligned}
\end{equation*}
Upon substituting Proposition~5.1 into the above expression, we see that it
coincides with the coefficient of $n^k$ in $S_2(n,m)$ given in~(5.7).
Hence,~(5.6) follows.
Together with~(5.5), this completes the proof of Lemma~5.2.
\end{proof}
%
%
\begin{proof}[Proof of Theorem 2.3]
Applying the binomial expansion to the definition of $\gamma(\alpha, \beta; \mu)$ in (1.2), we obtain
\begin{equation*}
\begin{array}{ll}
& \gamma (1, 1; \mu)
=  \lim\limits_{n\to \infty}
\left\{
\sum\limits_{l=1}^{\mu}\dfrac{1}{l}
\sum\limits_{k=1}^{n} \left( \dfrac{1}{k} \right)^{1+l-\mu}
\!\!\! +
\sum\limits_{k=2}^{n}k^{\mu-1}\log
\Big( \dfrac{k-1}{k} \Big) \right\}
\\[15pt]
= &
\lim\limits_{n\to\infty}
\left\{
\sum\limits_{l=1}^{\mu}\dfrac{1}{l}
  \sum\limits_{k=1}^{n} \left( \dfrac{1}{k} \right)^{1+l-\mu}
\!\!\! +
\sum\limits_{k=2}^{n-1}
  \bigl((k+1)^{\mu-1}-k^{\mu-1}\bigr)\log k
-
n^{\mu-1}\log n
\right\}
\\[15pt]
= & \lim\limits_{n\to \infty}
\left\{
\sum\limits_{l=1}^{\mu}\dfrac{1}{l}
\sum\limits_{k=1}^{n}{k^{\mu-l-1}} 
+
\sum\limits_{k=2}^{n}
\sum\limits_{j=0}^{\mu -2} \dbinom{\mu -1}{j} k^j 
\log k
- (n+1)^{\mu-1} \log n
\right\}.
\end{array}
\end{equation*}
On the other hand, 
Lemma~5.2 shows that the right-hand side of (2.4) becomes
\begin{equation*}
\begin{array}{ll}
& \dfrac{1}{\mu}\gamma+\sum\limits_{j=0}^{\mu-2}
\dbinom{\mu-1}{j}\log A_{j}
\\[15pt]
= &
\dfrac{1}{\mu}\gamma+
\lim\limits_{n\to \infty}
\left\{
\sum\limits_{k=1}^{n}
\sum\limits_{j=0}^{\mu -2} \dbinom{\mu -1}{j}
k^j\log k
-
\sum\limits_{j=0}^{\mu -2} \dbinom{\mu -1}{j}
p(n, j)
\right\}
\\[15pt]
= &
\lim\limits_{n\to \infty}
\left\{
\sum\limits_{k=1}^{n}
\sum\limits_{j=0}^{\mu -2} \dbinom{\mu -1}{j}
k^j\log k
-
(n+1)^{\mu -1}\log n
+\sum\limits_{l=1}^{\mu }\dfrac{1}{l} \sum\limits_{k=1}^{n}{k^{\mu-l-1}}
\right\}.
\end{array}
\end{equation*}
Therefore, the identity (2.4) holds, and this completes the proof of Theorem~2.3.
\end{proof}

\section{Proofs of Theorems~2.4 -- 2.6}
%
%
\begin{proof}[Proof of Theorem 2.4]
Following the same argument as in the proof of Theorem~2.2, we obtain
\begin{equation*}
\begin{aligned}
 D(1, 1; 0) 
 & =
 \textstyle\sum\limits_{k=2}^{\infty}
 k^{-1}
 \mathrm{Li}_1\left(k^{-1}\right)
 \;\; = \textstyle\sum\limits_{k=2}^{\infty}
k^{-1} (\log k - \log(k-1))
\\
& = \textstyle\sum\limits_{k=1}^{\infty}\dfrac{\log k}{k(k+1)}
\;
= \lim\limits_{n\to \infty}
\left( \textstyle\sum\limits_{k=1}^{n}\dfrac{\log k}{k}- \textstyle\sum\limits_{k=1}^{n}\dfrac{\log k}{k+1}
\right)
\\[4pt]
& = \log A_{-1}(0)-\log A_{-1}(1)
\;\;  =  \gamma_1-\log A_{-1}(1).
 \end{aligned}
\end{equation*}
Furthermore, we observe that
\[
\sum_{l=1}^{\infty} (-1)^l \zeta'(l+1)
=
\sum_{l=1}^{\infty} \sum_{k=1}^{\infty}
\frac{(-1)^{\,l+1}}{k^{\,l+1}} \log k
=
\sum_{k=1}^{\infty} \frac{\log k}{k(k+1)}.
\]
We note that $\log A_{-1}(a)$ 
is well defined for $a \in \mathbb{N}$, as well as for the first Stieltjes constant
$\gamma_1 = \log A_{-1}(0)$ by applying the Euler--Maclaurin summation formula;
this can also be observed from the convergence of the series
$\sum_{k=1}^{\infty} {\log k}/{k(k+a)}$.
This concludes the proof of Theorem~2.4.
\end{proof}
\begin{proof}[Proof of Theorem 2.5]
For $n, \mu \in \mathbb{N}$, we have
\begin{equation*}
\begin{aligned}
& \textstyle\sum\limits_{m=0}^{\infty}D(n,m;\mu)
\;  =  \;
\textstyle\sum\limits_{k=1}^{\infty}\{\zeta(n+k)-1\}
\textstyle\sum\limits_{m=0}^{\infty}
\dfrac{1}{(k+\mu)^{m}}
 \\
=  & 
\textstyle\sum\limits_{k=1}^{\infty}\{\zeta(n+k)-1\}
\Big( 1+\dfrac{1}{k+\mu-1} \Big)
 =  
D(n,0;\mu)+D(n,1; \mu -1)
\\
= & 
n-\textstyle\sum\limits_{k=2}^{n}\zeta(k)+H_{\mu-1}-\gamma(n,1;\mu -1).
\end{aligned}
\end{equation*}
Similarly, we have
\begin{equation*}
\begin{aligned}
& \textstyle\sum\limits_{m=0}^{\infty}(-1)^m D(n,m;\mu)
 \;  =  \; 
\textstyle\sum\limits_{k=1}^{\infty}\{\zeta(n+k)-1\}
\textstyle\sum\limits_{m=0}^{\infty}
\Big( \dfrac{-1}{k+\mu}\Big)^{m}
 \\
 =  & 
\textstyle\sum\limits_{k=1}^{\infty}\{\zeta(n+k)-1\}
\Big( 1-\dfrac{1}{k+\mu+1} \Big)
 = 
D(n,0;\mu)-D(n,1; \mu +1)
\\
 = &
n-\textstyle\sum\limits_{k=2}^{n}\zeta(k)-H_{\mu+1}+\gamma(n,1;\mu +1).
\end{aligned}
\end{equation*}
Here we have applied Theorem~2.2.
By combining the above identities, we obtain
\begin{equation*}
\begin{aligned}
& \textstyle\sum\limits_{m=0}^{\infty}D(n,2m+1;\mu)
= 
\lim\limits_{N\to \infty}\sum\limits_{m=0}^{N}
 \dfrac{1-(-1)^m}{2}\, D(n,m;\mu)
 \\
= &  H_{\mu}-\dfrac12\Big\{ \gamma(n,1;\mu +1)+\gamma(n,1;\mu -1) +\dfrac{1}{\mu}-\dfrac{1}{\mu+1}\Big\}.
\end{aligned}
\end{equation*}
This completes the proof of Theorem~2.5.
\end{proof}
%
\begin{proof}[Proof of Theorem 2.6]
Applying the binomial expansion, we have
\begin{equation*}
\begin{aligned}
& D(n,-m; \mu)
\;  =  \; 
\textstyle\sum\limits_{k=1}^{\infty}\{\zeta(n+k)-1\}
(k+\mu)^{m}
 \\ 
=  & 
\textstyle\sum\limits_{k=1}^{\infty}\{\zeta(n+k)-1\}
\textstyle\sum\limits_{l=0}^{m}\dbinom{m}{l} (\mu-n)^{m-l}(k+n)^{l}
\\ 
= & 
 \textstyle\sum\limits_{l=0}^{m} \dbinom{m}{l} (\mu-n)^{m-l} D(n, -l; n)
 \\
= & 
 \textstyle\sum\limits_{l=0}^{m} \dbinom{m}{l} (\mu-n)^{m-l} 
 \Big\{D(1, -l; 1) - \textstyle\sum\limits_{k=2}^{n}\{\zeta(k)-1\}k^{l}\Big\}
 \\
= & 
 \textstyle\sum\limits_{l=0}^{m} \dbinom{m}{l} (\mu-n)^{m-l} 
 D(1, -l; 1)
   - \textstyle\sum\limits_{k=2}^{n}\{\zeta(k)-1\}\sum\limits_{l=0}^{m} 
   \dbinom{m}{l}(\mu-n)^{m-l} k^{l}
 \\
= & 
 \textstyle\sum\limits_{l=0}^{m} \dbinom{m}{l} (\mu-n)^{m-l} 
 D(1, -l; 1)
 - \textstyle\sum\limits_{k=2}^{n}\{\zeta(k)-1\}(k+\mu-n)^{m}.
\end{aligned}
\end{equation*}
This compleats the proof of Theorem~2.6.
\end{proof}

\section{Functional relations}

In this section, we prove Theorem~2.7.
In classical analytic number theory, functional equations for zeta functions
are often obtained by shifting the contour in a Hankel-type integral representation and evaluating the resulting residues.
A similar phenomenon arises in the present setting.
Starting from the integral representation (2.1) in Theorem~2.1,
we can derive a functional relation for $D(s_1,s_2;\lambda)$.

Alternatively, since $D(s_1,s_2;\lambda)$ admits an explicit expression in terms
of the Lerch transcendent, the same relation also follows directly from the
known transformation formula of the latter.

\begin{proof}[Proof of Theorem~2.7]
We recall the Lerch transcendent (cf.~\cite[Section~1.11]{erdelyi})
\begin{equation*}
\Psi(z,s,\lambda)
:=
\sum_{m=0}^{\infty}\frac{z^m}{(m+\lambda)^s},
\end{equation*}
which converges absolutely for $|z|<1$, or for $|z|=1$ with $\Re(s)>1$,
where we assume $\lambda \notin \mathbb{Z}_{\leqslant 0}$.
By Definition~\((1.1)\), we observe immediately that
\[
D(s_1,s_2;\lambda)
=
\sum_{n=2}^{\infty}
\frac{1}{n^{s_1+1}}
\Psi\!\left(\frac{1}{n},s_2,\lambda+1\right).
\]
Using the transformation formula for the Lerch transcendent
\[
\Psi(z,s,\lambda)
=
z^{-\lambda}\Gamma(1-s)
\sum_{l\in\mathbb{Z}}
(2\pi i l-\log z)^{\,s-1}
e^{2\pi i l\lambda},
\]
which holds for $\Re(s)<0$, $|z|<1$, and $0<\lambda \leqslant 1$
(see \cite[1.11 (6)]{erdelyi}),
we obtain the desired functional relation.
\end{proof}

%
%


\end{document}